\newtheorem{Theorem}{Theorem}[section]
\newtheorem*{Lemma}{Lemma}
\newtheorem*{Coro}{Corollary}
\numberwithin{equation}{section}
\newcommand{\ba}{\beta}
\newcommand{\ga}{\gamma}
\newcommand{\ka}{\kappa}
\newcommand{\la}{\lambda}
\newcommand{\ta}{\theta}
\newcommand{\om}{\omega}
\author{Michel Lassalle\\
\small Centre National de la Recherche Scientifique\\[-0.8ex]
\small Institut Gaspard-Monge, Universit\'e de Marne-la-Vall\'ee\\[-0.8ex]
\small 77454 Marne-la-Vall\'ee Cedex, France\\[-0.8ex]
\small \texttt{lassalle@univ-mlv.fr}\\[-0.8ex]
\small \texttt{http://igm.univ-mlv.fr/{\textasciitilde}lassalle}
\\[1.6ex]
Michael J. Schlosser\\
\small Fakult\"at f\"ur Mathematik, Universit\"at Wien\\[-0.8ex]
\small Nordbergstra{\ss}e 15, A-1090 Wien, Austria\\[-0.8ex]
\small \texttt{michael.schlosser@univie.ac.at}\\[-0.8ex]
\small \texttt{http://www.mat.univie.ac.at/{\textasciitilde}schlosse}}
\title{Recurrence formulas\\ for Macdonald polynomials of type $A$}
\begin{document}

\date{}
\maketitle

\begin{abstract}
We consider products of two Macdonald polynomials of type $A$,
indexed by dominant weights which are respectively a multiple
of the first fundamental weight and a weight having zero component
on the $k$-th fundamental weight. We give the explicit decomposition
of any Macdonald polynomial of type $A$ in terms of this basis.\\

\noindent {\em 2010 Mathematics Subject Classification:}
Primary 33D52; Secondary 05E05, 15A09.
\end{abstract}

\section{Introduction}

In the 1980's, I. G. Macdonald introduced a class of orthogonal
polynomials which are Laurent polynomials in several variables
and generalize the Weyl characters of compact simple
Lie groups~\cite{Ma3,Ma4,Ma5}. In the simplest situation,
given a root system $R$, these polynomials are elements of the
group algebra of the weight lattice of $R$, indexed by the
dominant weights, and depending on two parameters $(q,t)$.  

When $R$ is of type $A_n$, these Macdonald polynomials are
in bijective correspondence with the symmetric functions
$\mathcal{P}_{\la}(q,t)$ indexed by partitions, that were introduced
by Macdonald some years before~\cite{Ma1,Ma2}.
In fact, they correspond to $\mathcal{P}_{\la}(q,t)(x_1,\ldots,x_{n+1})$,
for a partition $\la=(\la_1,\ldots,\la_{n})$ of length $n$,
with the $n+1$ variables $(x_1,\ldots,x_{n+1})$ linked by the
condition $x_1\cdots x_{n+1}=1$.

The purpose of this article is to extend the result of~\cite{LS},
given for the symmetric functions $\mathcal{P}_{\la}(q,t)$, 
to the framework of the root system $A_n$. 

More precisely, in~\cite[Theorem~4.1]{LS} we obtained a recurrence formula giving the symmetric function $\mathcal{P}_{(\la_1,\ldots,\la_{n})}(q,t)$ as a sum
\begin{equation}
\mathcal{P}_{(\la_1,\ldots,\la_{n})}= \sum_{\ta\in\mathbb{N}^{n-1}}
C_{\ta_1,\ldots,\ta_{n-1}}
\mathcal{P}_{(\la_1+\ta_1,\ldots,\la_{n-1}+\ta_{n-1})}\,\mathcal{P}_{\la_{n}-|\ta|},
\end{equation}
with $|\ta| = \sum_{i = 1}^{n-1} \ta_i$ and $\mathbb{N}$ the set of non-negative integers.
This formula was obtained by inverting the ``Pieri formula'',
which conversely expresses the product
$\mathcal{P}_{(\la_1,\ldots,\la_{n-1})} \, \mathcal{P}_{\la_{n}}$ as a sum 
\begin{equation*}
\mathcal{P}_{(\la_1,\ldots,\la_{n-1})} \, \mathcal{P}_{\la_{n}}= \sum_{\ta\in \mathbb{N}^{n-1}}
c_{\ta_1,\ldots,\ta_{n-1}}\,
\mathcal{P}_{(\la_1+\ta_1,\ldots,\la_{n-1}+\ta_{n-1},\la_{n}-|\ta|)}.
\end{equation*}
Both expansions are identities between symmetric functions,
valid for any number of variables. 

These identities may also be written in terms of Macdonald polynomials
of type $A_n$. For this purpose let $\{\om_i, 1\le i\le n\}$ be the
$n$ fundamental weights of the root system $A_n$.
Let $P_\la$ denote the Macdonald polynomial associated with the
dominant weight $\la=\sum_{i=1}^n \la_i\om_i$.
The recurrence formula (1.1), written for $n+1$ variables
$(x_1,\ldots,x_{n+1})$ linked by $x_1\cdots x_{n+1}=1$, yields
\begin{equation}
P_{\la}= \sum_{\ta\in\mathbb{N}^{n-1}}
C_{\ta_1,\ldots,\ta_{n-1}}
P_{(\la_{n}-|\ta|)\om_1}\,P_{\mu},
\end{equation}
with
$\mu=\sum_{i=1}^{n-2} (\la_i+\ta_i-\ta_{i+1})\om_i+
(\la_{n-1}+\la_{n}+\ta_{n-1})\om_{n-1}$.
This alternative formulation is obvious and does not bring anything new. 

However the method of~\cite{LS}, when applied in the $A_n$ root system
framework, allows to get a much stronger result.
Indeed, let $k$ be a fixed integer with $1\le k\le n$.
In this paper we shall write the Macdonald polynomial $P_\la$
in terms of products $P_{r\om_1}P_\mu$, with
$\mu=\sum_{i=1}^{n} \mu_i\om_i$ and $\mu_k=0$.
There are $n$ such recurrence formulas, (1.2) being the
particular case $k=n$ of the latter.

This paper is organized as follows.
In Section~\ref{secmacd} we introduce our notation for the
root system $A_n$ and recall general facts about the
corresponding Macdonald polynomials.
Their Pieri formula, which involves a specific infinite
multidimensional matrix, is studied in Section~\ref{secpieri},
starting from the one given by Macdonald for the symmetric functions
$\mathcal{P}_\la(q,t)$~\cite[p.~340]{Ma2}.
In Section~\ref{secrec} we invert the Pieri matrix by applying a
particular multidimensional matrix inverse, given separately
in the Appendix. This matrix inverse is equivalent to one
previously obtained in~\cite[Section~2]{LS} by using operator methods.
As result of inverting the Pieri formula we obtain recurrence formulas
for $A_n$ Macdonald polynomials.
Finally, in Section~\ref{secex} we detail the examples of the
$A_2$ and $A_3$ cases and compare them to earlier results.

\smallskip
{\it Acknowledgemnts.}
%\footnotesize
We thank the anonymous referees for helpful comments.
The second author was partly supported by FWF Austrian Science Fund
grants P17563-N13 and S9607.
\normalsize

\newpage
\section{Macdonald polynomials of type $A$}
\label{secmacd}

The standard references for Macdonald polynomials associated with
root systems are~\cite{Ma3,Ma4,Ma5}. 

Let us consider the space $\mathbb{R}^{n+1}$ endowed with the usual
scalar product and the quotient space
$V=\mathbb{R}^{n+1}/\mathbb{R}(1,\ldots,1)$,
where $\mathbb{R}(1,\ldots,1)$ is the subspace spanned by the vector
$(1,\ldots,1)$. Let $\varepsilon_1,\ldots,\varepsilon_{n+1}$
denote the images in $V$ of the coordinate vectors of $\mathbb{R}^{n+1}$,
linked by $\sum_{i=1}^{n+1} \varepsilon_i=0$. 

The root system of type $A_n$ is formed by the vectors
$\{\varepsilon_i-\varepsilon_j, i\neq j\}$. The positive roots are
$\{\varepsilon_i-\varepsilon_j, i<j\}$ and the simple roots are
$\varepsilon_i-\varepsilon_{i+1}$ for $1\le i \le n$.
The Weyl group is the symmetric group $W=S_{n+1}$ acting by
permutation of the coordinates. 

The weight lattice $P$ is formed by integral linear combinations
of the fundamental weights $\{\om_i, 1\le i \le n\}$, defined by
$\omega_i=\varepsilon_1+\ldots+ \varepsilon_{i}$. Let $\omega_{i}=0$ for $i=0,n+1$. We denote by $P^+$
the set of dominant weights $\la=\sum_{i=1}^{n} \la_i\om_i$, which
are non-negative integral linear combinations of the fundamental weights. 

There is the following correspondence between dominant weights and
partitions. Given a dominant weight, if we write it as
\[\la=\sum_{i=1}^{n} \la_i\om_i=\sum_{i=1}^{n+1} \mu_i\varepsilon_i,\]
the sequence $\mu=(\mu_1,\ldots,\mu_{n+1})$ is a partition with length
$\le n+1$. We have 
\[\la_i=\mu_{i}-\mu_{i+1}\quad\textrm{and} \quad
\mu_i=\mu_{n+1}+\sum_{j=i}^{n}\la_j.\] 
Thus $\mu$ is defined up to $\mu_{n+1}$ and two partitions
$\mu, \nu$ correspond to the same weight $\la$ if and only if
$\mu_1-\nu_1=\cdots= \mu_{n+1}-\nu_{n+1}$.
We denote by $\mathcal{C}_\la$ the family of partitions thus defined.  

Let $A$ denote the group algebra over $\mathbb{R}$ of the free
Abelian group $P$. For each $\la \in P$ let $e^\la$ denote the
corresponding element of $A$, subject to the multiplication rule
$e^\la e^\mu = e^{\la +\mu}$. The set $\{e^\la, \la\in P\}$ forms an
$\mathbb{R}$-basis of $A$.

The Weyl group $W=S_{n+1}$ acts on $P$ and on $A$. Let $W\la$ denote the orbit of $\la \in P$ and
$A^{W}$ the subspace of $W$-invariants in $A$.
There are two important bases of $A^{W}$, both indexed by dominant weights. 
The first one is given by the orbit-sums
\[m_\la = \sum_{\mu\in W\la}e^{\mu}.\]
The second one is provided by the Weyl characters 
\[\chi_\la = \delta^{-1} \sum_{w\in W}\mathrm{det}(w)e^{w(\la + \rho)},\]
with $\rho=\sum_{i=1}^n (n-i+1)\varepsilon_i$ and
$\delta=\sum_{w\in W}\mathrm{det}(w)e^{w(\rho)}$.
The Macdonald polynomials $\{P_\la, \la \in P^+\}$ form another basis,
defined as the eigenvectors of a specific self-adjoint operator
(which we do not describe here).

{}For $1\le i \le n+1$  define $x_i=e^{\varepsilon_i}$, so that the
variables $x_i$ are linked by $x_1\cdots x_{n+1}=1$. Then $\delta$
is the Vandermonde determinant $\prod_{i < j} (x_i-x_j)$.
There is a correspondence between $A^{W}$ and the symmetric polynomials
restricted to $n+1$ variables $x=(x_1,\ldots,x_{n+1})$ linked by the
previous condition. 

In terms of bases this correspondence may be described as follows.
Let $\la$ be any dominant weight and $x_1\cdots x_{n+1}=1$. All monomial symmetric functions
$m_\mu(x_1,\ldots,x_{n+1})$ with $\mu \in \mathcal{C}_\la$ are equal
and their common value is the orbit-sum $m_\la$.
Similarly, the Weyl character $\chi_\la$ is the common value of the
Schur functions $s_\mu(x_1,\ldots,x_{n+1})$, $\mu \in \mathcal{C}_\la$,
whereas the Macdonald polynomial $P_\la$ is the common value of the
symmetric polynomials $\mathcal{P}_{\mu}(q,t)(x_1,\ldots,x_{n+1})$, with $\mu \in \mathcal{C}_\la$ and $\mathcal{P}_\mu(q,t)$ the symmetric function studied in Chapter 6 of~\cite{Ma2}. 

Given a positive integer $r$ and a dominant weight $\la$, the
``Pieri formula'' expands the product
\[P_{r\omega_1} \, P_{\la} =\sum_{\rho} c_\rho \, P_{\la+\rho},\]
in terms of Macdonald polynomials,
where the range of $\rho$ and the values of the coefficients
$c_\rho$ are to be determined. 

Let $Q$ denote the root lattice, spanned by the simple roots.
For any vector $\tau$, define
 \[\Sigma(\tau)= C(\tau) \cap (\tau+Q)\]
with $C(\tau)$ the convex hull of the Weyl group orbit of $\tau$.
Since the orbit of $\om_1=\varepsilon_1$ is the set
$\{\varepsilon_i=\om_i-\om_{i-1},1\le i \le n+1\}$, it is clear that $\Sigma(r\omega_1)$ is formed by vectors
\[\sum_{i=1}^{n+1} \ta_i(\om_i-\om_{i-1})=
\sum_{i=1}^n (\ta_i-\ta_{i+1})\om_i,\]
with $\ta=(\ta_1,\ldots,\ta_{n+1})\in \mathbb{N}^{n+1}$ and
$|\ta|=\sum_{i=1}^{n+1} \ta_i=r$. 

By general results~\cite[(5.3.8), p. 104]{Ma5}, it is known that
the sum on the right-hand side of the Pieri formula is restricted
to vectors $\rho$ such that $\rho \in \Sigma(r\omega_1)$ and
$\la+\rho\in P^+$. In the next section we shall give a direct proof
of this result and make the value of the coefficient $c_\rho$ explicit.

\section{Pieri formula}
\label{secpieri}

Let $0< q <1$.
For any integer $r$, the classical $q$-shifted factorial $(u;q)_r$
is defined by
\begin{equation*}
(u;q)_{\infty}=\prod_{j\ge 0}(1-uq^j),\qquad
(u;q)_r=(u;q)_{\infty}/(uq^r;q)_{\infty}.
\end{equation*}

Let $u=(u_1,\ldots,u_{m})$ be $m$ indeterminates and
$\ta=(\ta_1,\ldots,\ta_{m})\in \mathbb{N}^{m}$.
For clarity of display, throughout this paper,
any time such a pair $(u,\ta)$ is given, we shall implicitly
assume $m$ auxiliary variables $v=(v_1,\ldots,v_{m})$
to be defined by $v_i=q^{\ta_i}u_i$. 

Macdonald polynomials of type $A_n$ satisfy the following Pieri formula.
\begin{Theorem}\label{theopieri}
Let  $\la=\sum_{i=1}^{n} \la_i\om_{i}$ be a dominant weight
and $r\in\mathbb{N}$. For any $1 \le i \le n+1$ define
\[u_i=q^{\sum_{j=i}^{n}\la_j}  t^{-i},\]
and for $\ta \in \mathbb{N}^{n+1}$,
\[d_\ta (u_1,\ldots,u_{n+1};r)=\frac{(q;q)_{r}}{(t;q)_{r}}\,
\prod_{j=1}^{n+1}
\frac{(t;q)_{\ta_j}}{(q;q)_{\ta_j}}\,
\prod_{1\le i < j \le n+1}
\frac{(tv_i/v_j;q)_{\ta_j}}{(qv_i/v_j;q)_{\ta_j}}\,
\frac{(qu_i/tv_j;q)_{\ta_j}}{(u_i/v_j;q)_{\ta_j}}.\]
We have
\[ P_{r\om_1} \, P_{\la}
=\sum_{\begin{subarray}{c}\ta\in \mathbb{N}^{n+1}\\ |\ta|=r\end{subarray}}
d_\ta (u_1,\ldots,u_{n+1};r) \, P_{\la+\rho},\]
with $\rho=\sum_{i=1}^n (\ta_i-\ta_{i+1})\om_i$.
\end{Theorem}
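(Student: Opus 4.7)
The plan is to derive this Pieri formula from Macdonald's classical Pieri formula for the symmetric functions $\mathcal{P}_{\mu}(q,t)$~\cite[Ch.~VI, (6.24)]{Ma2}, by restricting to $n+1$ variables subject to $x_1\cdots x_{n+1}=1$ and translating into the weight-lattice language of Section~\ref{secmacd}. I begin by fixing the canonical representative $\mu=(\mu_1,\ldots,\mu_{n+1})\in\mathcal{C}_\la$ with $\mu_{n+1}=0$, so that $\mu_i=\sum_{j=i}^{n}\la_j$. Under the specialization $x_1\cdots x_{n+1}=1$ we then have $P_\la=\mathcal{P}_{\mu}(x_1,\ldots,x_{n+1})$ and $P_{r\om_1}=\mathcal{P}_{(r)}(x_1,\ldots,x_{n+1})$, and Macdonald's formula reads
\[\mathcal{P}_{(r)}\,\mathcal{P}_{\mu}=\sum_{\nu}\varphi_{\nu/\mu}(q,t)\,\mathcal{P}_{\nu},\]
the sum running over partitions $\nu$ of length at most $n+1$ such that $\nu/\mu$ is a horizontal $r$-strip.

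Setting $\ta_i=\nu_i-\mu_i$ establishes a bijection between such $\nu$ and the vectors $\ta\in\mathbb{N}^{n+1}$ with $|\ta|=r$ that satisfy the strip constraints $\ta_{i+1}\le\la_i$ for $1\le i\le n$; moreover, the weight of $\nu$ is precisely $\la+\rho$ with $\rho=\sum_{i=1}^{n}(\ta_i-\ta_{i+1})\om_i$. With $u_i=q^{\mu_i}t^{-i}$ and $v_i=q^{\nu_i}t^{-i}$, the various ratios $u_i/v_j$ and $v_i/v_j$ appearing in $d_\ta$ acquire a direct combinatorial meaning in terms of $\mu$ and $\nu$. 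To match the statement of the theorem, which sums over \emph{unrestricted} $\ta\in\mathbb{N}^{n+1}$ of size $r$, I verify that $d_\ta$ vanishes outside the strip region: whenever $\ta_{i+1}\ge\la_i+1$, the factor $(qu_i/tv_{i+1};q)_{\ta_{i+1}}$ contains the vanishing term $1-q^{1+\la_i-\ta_{i+1}+k}$ at $k=\ta_{i+1}-\la_i-1\in\{0,\ldots,\ta_{i+1}-1\}$, so those $\ta$ contribute zero and the two sums agree in scope.

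It remains to compare $\varphi_{\nu/\mu}(q,t)$ with $d_\ta(u_1,\ldots,u_{n+1};r)$ on the strip region. I rewrite Macdonald's formula using the identity $(a;q)_m=(a;q)_\infty/(aq^m;q)_\infty$, group the resulting factors by the column index $j$, and translate back to the $u_j,v_j$ variables; this produces the three products $(t;q)_{\ta_j}/(q;q)_{\ta_j}$, $(tv_i/v_j;q)_{\ta_j}/(qv_i/v_j;q)_{\ta_j}$ and $(qu_i/tv_j;q)_{\ta_j}/(u_i/v_j;q)_{\ta_j}$ together with the overall prefactor $(q;q)_r/(t;q)_r$, the latter coming from the renormalization that relates the Pieri operators $g_r$ and $\mathcal{P}_{(r)}$ in Macdonald's conventions. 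The main technical obstacle is precisely this last comparison: the raw form of $\varphi_{\nu/\mu}$ is a finite double product over the shape of $\nu$, whereas $d_\ta$ is a product of $q$-shifted factorials truncated at $\ta_j$, and matching them requires careful bookkeeping of the telescoping cancellations between the entries $\mu_i$ and $\nu_i=\mu_i+\ta_i$, as well as a separate treatment of the boundary index $j=n+1$ where the convention $\om_{n+1}=0$ enters.
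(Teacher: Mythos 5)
Your proposal follows essentially the same route as the paper's proof: start from Macdonald's Pieri rule for $g_r\,\mathcal{P}_\mu$ with the representative $\mu\in\mathcal{C}_\la$ normalized by $\mu_{n+1}=0$, observe that the coefficient vanishes outside the horizontal-strip region because of a zero in the factor $(qu_i/tv_{i+1};q)_{\ta_{i+1}}$ (this is exactly the paper's remark following the proof), and then translate partitions back into dominant weights. The only portion you leave as a sketch is the computational core — the identification of $\varphi_{\kappa/\mu}$ (up to the $(q;q)_r/(t;q)_r$ normalization) with the product of $q$-shifted factorials defining $d_\ta$ — which the paper carries out explicitly by first rewriting the double product of $f$'s as a ratio of factors $w_s(u)=(tu;q)_s/(qu;q)_s$ and then substituting $u_i=q^{\mu_i}t^{-i}$, $v_i=q^{\kappa_i}t^{-i}$; your description of how to do this is the correct mechanism, so what is missing is execution rather than an idea.
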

\begin{proof}
In a first step, we write the Pieri formula for arbitrary
$\mathcal{P}_\mu (q,t)$ with $\mu=(\mu_1,\ldots,\mu_n)$ being a partition
having length $\le n$.
We start from~\cite[p.~340, Eq.~(6.24)(i)]{Ma2} and
\cite[p.~342, Example~2(a)]{Ma2}. Replacing $g_r$ by
$(t;q)_r/(q;q)_r\, \mathcal{P}_{(r)}$ we have
\[\mathcal{P}_{(r)}\, \mathcal{P}_{\mu}=\sum_{\kappa \supset\mu}
\varphi_{\kappa/\mu}\,\mathcal{P}_{\kappa},\]
where the skew-diagram $\kappa-\mu$ is a horizontal
$r$-strip, i.e.\ has at most one node in each column.
The Pieri coefficient $\varphi_{\kappa/\mu}$ is given by
\[\frac{(t;q)_{r}}{(q;q)_{r}}\,\varphi_{\kappa/\mu}=
\prod_{1\le i\le j\le l(\kappa)}
\frac{f(q^{\kappa_{i}-\kappa_{j}}t^{j-i})}
{f(q^{\kappa_{i}-\mu_{j}}t^{j-i})}\,
\frac{f(q^{\mu_{i}-\mu_{j+1}}t^{j-i})}
{f(q^{\mu_{i}-\kappa_{j+1}}t^{j-i})}
=\prod_{1\le i\le j\le l(\kappa)}
\frac{w_{\kappa_j-\mu_j}(q^{\kappa_{i}-\kappa_{j}}t^{j-i})}
{w_{\kappa_{j+1}-\mu_{j+1}}(q^{\mu_{i}-\kappa_{j+1}}t^{j-i})},\]
with $f(u)=(tu;q)_{\infty}/(qu;q)_{\infty}$ and
$w_{s}(u)=(tu;q)_{s}/(qu;q)_{s}$.

Since $\kappa-\mu$ is a horizontal strip, the length $l(\kappa)$ of
$\kappa$ is at most equal to $n+1$, so we can write
$\kappa =(\mu_1+\ta_1,\ldots,\mu_n+\ta_n,\ta_{n+1})$, with $|\ta|=r$.
Then
\begin{align*}
\frac{(t;q)_{r}}{(q;q)_{r}}\,\varphi_{\kappa/\mu}&=
\prod_{1\le i\le j\le l(\kappa)}w_{\ta_{j}}(q^{\kappa_{i}-\kappa_{j}}t^{j-i})
\prod_{1\le i<j\le l(\kappa)+1}\big(w_{\ta_{j}}
(q^{\mu_{i}-\kappa_{j}}t^{j-i-1})\big)^{-1}\\
&=\prod_{j=1}^{n+1}
\frac{(t;q)_{\ta_j}}{(q;q)_{\ta_j}}\,
\prod_{1\le i < j \le n+1}
\frac{(tv_i/v_j;q)_{\ta_j}}{(qv_i/v_j;q)_{\ta_j}}\,
\frac{(qu_i/tv_j;q)_{\ta_j}}{(u_i/v_j;q)_{\ta_j}},
\end{align*}
where for $1\le i \le n+1$ we set $u_i=q^{\mu_i} t^{-i}$
and $v_i=q^{\kappa_i}t^{-i}=q^{\ta_i} u_i$.

In a second step we translate this result in terms of $A_n$
Macdonald polynomials. Given the dominant weight $\la$,
we choose $\mu=(\mu_1,\ldots,\mu_{n+1})$ to be the unique element
of $\mathcal{C}_\la$ such that $\mu_{n+1}=0$, i.e.\ with length $\le n$.
For $1\le i\le n$ we have $\mu_i=\sum_{j=i}^{n}\la_j$.
As for the partition $\kappa$ (with length $\le n+1$),
it belongs to $\mathcal{C}_\sigma$ with
$\sigma=\sum_{k=1}^n (\kappa_{k}-\kappa_{k+1})\om_k=
\sum_{k=1}^n (\la_{k}+\ta_k-\ta_{k+1})\om_k$. Hence the statement.
\end{proof}

\noindent \textit{Remark.} On the right-hand side of the Pieri formula, the condition $\la+\rho\in P^+$ is necessarily satisfied as soon as $d_\ta (u_1,\ldots,u_{n+1};r)\neq 0$. Using the correspondence between dominant weights and partitions, this may be verified on the Pieri formula 
\[\mathcal{P}_{(r)}\, \mathcal{P}_{\mu}=\sum_{\kappa =(\mu_1+\ta_1,\ldots,\mu_n+\ta_n,\ta_{n+1})}\varphi_{\kappa/\mu}\,\mathcal{P}_{\ka}.\] 
We only have to show that $\varphi_{\kappa/\mu}$ necessarily vanishes when the multi-integer $\ka$ is not a partition. But then there is an index $i$ such that $\ka_i <\ka_{i+1}$ so that the factor $(qu_i/tv_{i+1};q)_{\ta_{i+1}}$ in $\varphi_{\kappa/\mu}$ writes out as
\[(1-q^{1+\mu_i-\ka_{i+1}})\cdots(1-q^{\mu_i-\mu_{i+1}}).\]
Due to $\ka_i <\ka_{i+1}$ this product would be $\neq 0$ only if $\mu_i< \mu_{i+1}$, which is impossible since $\mu$ is a partition.
\smallskip

{}From now on, we fix some integer $1\le k\le n$.
Substituting $r-|\ta|$ for $\ta_k$, the Pieri formula
may be written in the more explicit form
\[P_{r\om_1} \, P_{\la}=
\sum_{\begin{subarray}{c}\ta=(\ta_1,\ldots,\ta_{k-1},0,
\ta_{k+1},\ldots, \ta_{n+1})\in \mathbb{N}^{n} \\
|\ta|\le r\end{subarray}}
\hat{d_\ta} (u_1,\ldots,u_{n+1};r) \, P_{\la+\rho},\]
with 
\[\rho=\sum_{\begin{subarray}{c}1\le i \le n \vspace{.05 cm} \\
i\neq k-1,k\end{subarray}} (\ta_i-\ta_{i+1})\om_i+
\ta_{k-1}\,\om_{k-1}+(r-|\ta|)(\om_k-\om_{k-1})-\ta_{k+1}\,\om_{k},\]
and
\begin{multline*}
\hat{d_\ta} (u_1,\ldots,u_{n+1};r) =\frac{(q;q)_{r}}{(t;q)_{r}}\,
\frac{(t;q)_{r-|\ta|}}{(q;q)_{r-|\ta|}}\,
\prod_{\begin{subarray}{c}j=1\\
j \neq k\end{subarray}}^{n+1}
\frac{(t;q)_{\ta_j}}{(q;q)_{\ta_j}}\\\times
\prod_{\begin{subarray}{c}1\le i < j \le n+1 \\
j \neq k\end{subarray}}
\frac{(tv_i/v_j;q)_{\ta_j}}{(qv_i/v_j;q)_{\ta_j}}\,
\frac{(qu_i/tv_j;q)_{\ta_j}}{(u_i/v_j;q)_{\ta_j}}
\prod_{i=1}^{k-1}
\frac{(tv_i/v_k;q)_{r-|\ta|}}{(qv_i/v_k;q)_{r-|\ta|}}\,
\frac{(qu_i/tv_k;q)_{r-|\ta|}}{(u_i/v_k;q)_{r-|\ta|}}.
\end{multline*}
Here $u_i,v_i$ $(1\le i \le n+1)$ are as in Theorem~\ref{theopieri},
except $v_k=q^{r-|\ta|}u_k$. The sum is restricted to $|\ta|\le r$ since $1/(q;q)_s=0$ for $s<0$.

In a second step, we concentrate on the situation $\la_k=0$.
Then each term on the right-hand side vanishes unless $\ta_{k+1}=0$.
Indeed, if $\la_k=0$, one has $u_{k}=tu_{k+1}$ and
$v_{k+1}=q^{\ta_{k+1}}u_{k+1}$. Hence for $i=k$ and $j=k+1$
the factor $(qu_i/tv_j;q)_{\ta_j}$ evaluates as
\[(qu_k/tv_{k+1};q)_{\ta_{k+1}}=(q^{1-\ta_{k+1}};q)_{\ta_{k+1}}
=\delta_{\ta_{k+1},0}.\]
Therefore if $\la_{k}=0$ the Pieri formula can be written as
\[P_{r\om_1} \,P_{\la}=
\sum_{\begin{subarray}{c}\ta=(\ta_1,\ldots,\ta_{k-1},0,0,
\ta_{k+2},\ldots,\ta_{n+1})\in \mathbb{N}^{n-1} \\
|\ta|\le r\end{subarray}}
\tilde{d}_\ta (u_1,\ldots,u_{k-1},u_k,u_{k+2},\ldots,u_{n+1};k,r) \, \, P_{\la+\rho},\]
with
\[\rho=\sum_{\begin{subarray}{c}1\le i \le n \\
i\neq k-1,k,k+1\end{subarray}} (\ta_i-\ta_{i+1})\om_i+
\ta_{k-1}\,\om_{k-1}+(r-|\ta|)(\om_{k}-\om_{k-1})-\ta_{k+2}\,\om_{k+1},\]
and 
\begin{multline*}
\tilde{d}_\ta (u_1,\ldots,u_{k-1},u_k,u_{k+2},\ldots,u_{n+1};k,r)=\\
\frac{(q;q)_r}{(t;q)_r}\,
\frac{(t;q)_{r-|\ta|}}{(q;q)_{r-|\ta|}}\,
\prod_{\begin{subarray}{c}i=1 \vspace{.05 cm} \\
i\neq k,k+1\end{subarray}}^{n+1} \frac{(t;q)_{\ta_i}}{(q;q)_{\ta_i}}\,
\prod_{\begin{subarray}{c}1\le i < j \le n+1 \vspace{.05 cm} \\
i\neq k,k+1\\ j\neq k,k+1\end{subarray}}
\frac{(tv_i/v_j;q)_{\ta_j}}{(qv_i/v_j;q)_{\ta_j}}\,
\frac{(qu_i/tv_j;q)_{\ta_j}}{(u_i/v_j;q)_{\ta_j}}\\\times
\prod_{i=1}^{k-1}
\frac{(tv_i/v_k;q)_{r-|\ta|}}{(qv_i/v_k;q)_{r-|\ta|}}\,
\frac{(qu_i/tv_k;q)_{r-|\ta|}}{(u_i/v_k;q)_{r-|\ta|}}\,
\prod_{j=k+2}^{n+1}
\frac{(tv_k/v_j;q)_{\ta_j}}{(qv_k/v_j;q)_{\ta_j}}\,
\frac{(qu_k/t^2v_j;q)_{\ta_j}}{(u_k/tv_j;q)_{\ta_j}}.
\end{multline*}
Here the notations are the same as before, including $v_k=q^{r-|\ta|}u_k$. For $j\ge k+2$ we have used
\[
\frac{(tv_k/v_j;q)_{\ta_j}}{(qv_k/v_j;q)_{\ta_j}}\,
\frac{(qu_k/tv_j;q)_{\ta_j}}{(u_k/v_j;q)_{\ta_j}}\,
\frac{(tv_{k+1}/v_j;q)_{\ta_j}}{(qv_{k+1}/v_j;q)_{\ta_j}}\,
\frac{(qu_{k+1}/tv_j;q)_{\ta_j}}{(u_{k+1}/v_j;q)_{\ta_j}}=
\frac{(tv_k/v_j;q)_{\ta_j}}{(qv_k/v_j;q)_{\ta_j}}\,
\frac{(qu_k/t^2v_j;q)_{\ta_j}}{(u_k/tv_j;q)_{\ta_j}},\]
which is a direct consequence of $v_{k+1}=u_{k+1}=u_{k}/t$.

{}In a third step, we perform some relabelling in order to remove the two 0's appearing in $\ta$. For that purpose, for $n$ indeterminates $(u_0,u_1,\ldots,u_{n-1})$ and
$\ta=(\ta_1,\ldots,\ta_{n-1})\in \mathbb{N}^{n-1}$, we define
\begin{multline*}
D_{\ta} (u_0,u_1,\ldots,u_{n-1};k,r)=\\
(q/t)^{|\ta|}\,
\frac{(t^2u_0;q)_{|\ta|}}{(qtu_0;q)_{|\ta|}}\,
\prod_{i=1}^{n-1}
\frac{(t;q)_{\ta_i}}{(q;q)_{\ta_i}}\,
\frac{(q^{|\ta|+1}u_i;q)_{\ta_i}}
{(q^{|\ta|}tu_i;q)_{\ta_i}}\,
\prod_{1\le i<j\le n-1} \frac{(tv_i/v_j;q)_{\ta_j}}
{(qv_i/v_j;q)_{\ta_j}}\,
\frac{(qu_i/tv_j;q)_{\ta_j}}
{(u_i/v_j;q)_{\ta_j}}
\\\times \prod_{i=1}^{k-1}
\frac{(u_i/u_0;q)_{\ta_i}}{(qu_i/tu_0;q)_{\ta_i}}\,
\frac{(qu_i/tu_0;q)_{\ta_i-r+|\ta|}}{(u_i/u_0;q)_{\ta_i-r+|\ta|}}\,
\frac{(u_i/tu_0;q)_{\ta_i-r+|\ta|}}{(qu_i/t^2u_0;q)_{\ta_i-r+|\ta|}}\,
\prod_{i=k}^{n-1}\frac{(tu_i/u_0;q)_{\ta_i}}{(qu_i/u_0;q)_{\ta_i}}.
\end{multline*}
\begin{Lemma}
If we write
\begin{equation*}
w_i=\begin{cases}
q^{-r}t^{-2},&\qquad i=0,\\
q^{-r}u_i/tu_k,&\qquad 1\le i\le k-1,\\
q^{-r}u_{i+2}/tu_k,&\qquad k\le i\le n-1,
\end{cases}
\end{equation*}
we have
\begin{equation*}
D_{\ta} (w_0,w_1,\ldots,w_{n-1};k,r)=
\tilde{d}_{(\ta_1,\ldots,\ta_{k-1},0,0,\ta_k,\ldots,\ta_{n-1})}
(u_1,\ldots,u_{k-1},u_k,u_{k+2},\ldots,u_{n+1};k,r).
\end{equation*}
\end{Lemma}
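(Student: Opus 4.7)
The lemma asserts a purely algebraic identity between two expressions built from $q$-shifted factorials, and I would prove it by direct substitution and factor-by-factor matching. The only tools needed are the splitting rule $(a;q)_{m+n}=(a;q)_m(aq^m;q)_n$, the negative-index convention $(a;q)_{-n}=1/(aq^{-n};q)_n$, and the reflection $(a;q)_n=(-a)^n q^{\binom{n}{2}}(q^{1-n}/a;q)_n$. The values of $w_0,\ldots,w_{n-1}$ are engineered exactly so that these identities collapse $D_\theta$ at the substituted arguments into $\tilde{d}$.

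First I would fix the relabelling. The new indices $I=1,\ldots,n-1$ correspond to the old indices $\{1,\ldots,k-1,k+2,\ldots,n+1\}$; writing $\hat u_I=u_I$ for $I<k$, $\hat u_I=u_{I+2}$ for $I\ge k$, and $\hat v_I=q^{\theta_I}\hat u_I$, and using the relation $v_k=q^{r-|\theta|}u_k$, one computes $V_I:=q^{\theta_I}w_I=q^{-r}\hat v_I/(tu_k)$, so $V_I/V_J=\hat v_I/\hat v_J$. Consequently the double product $\prod_{1\le I<J\le n-1}$ in $D_\theta$ agrees termwise with the restricted double product in $\tilde{d}$, and the diagonal product $\prod_{I=1}^{n-1}(t;q)_{\theta_I}/(q;q)_{\theta_I}$ matches trivially after removing the two zero entries of $\theta$. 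The prefactor $(q/t)^{|\theta|}(t^2w_0;q)_{|\theta|}/(qtw_0;q)_{|\theta|}$, with $w_0=q^{-r}t^{-2}$, equals $(q/t)^{|\theta|}(q^{-r};q)_{|\theta|}/(q^{1-r}/t;q)_{|\theta|}$ and, by reflection applied separately to numerator and denominator, reduces to $(q;q)_r(t;q)_{r-|\theta|}/\bigl((t;q)_r(q;q)_{r-|\theta|}\bigr)$, matching the leading factors of $\tilde{d}$.

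The main step is the matching of the $v_k$-dependent factors. Using $q^{|\theta|+1}w_I=q\hat u_I/(tv_k)$ and $q^{|\theta|}tw_I=\hat u_I/v_k$, the uniform factor $(q^{|\theta|+1}U_I;q)_{\theta_I}/(q^{|\theta|}tU_I;q)_{\theta_I}$ rewrites as $(qa/t;q)_{\theta_I}/(a;q)_{\theta_I}$ with $a:=\hat u_I/v_k$. For $I\ge k$ this combines with $(tU_I/U_0;q)_{\theta_I}/(qU_I/U_0;q)_{\theta_I}=(t^2\hat u_I/u_k;q)_{\theta_I}/(qt\hat u_I/u_k;q)_{\theta_I}$, and reflection applied to each of the four Pochhammer symbols of the target $(tv_k/\hat v_I;q)_{\theta_I}(qu_k/t^2\hat v_I;q)_{\theta_I}/\bigl((qv_k/\hat v_I;q)_{\theta_I}(u_k/t\hat v_I;q)_{\theta_I}\bigr)$ shows that the scalar factors $(\pm t/q)^{\theta_I}$ and $(\pm q/t)^{\theta_I}$ together with the $q^{-\binom{\theta_I+1}{2}}$ prefactors cancel, reproducing the above $D_\theta$-expression. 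For $I<k$, the identity $(b;q)_{\theta_I}/(b;q)_{\theta_I-r+|\theta|}=(bq^{\theta_I-r+|\theta|};q)_{r-|\theta|}$ applied to $b=tq^{r-|\theta|}a$ and $b=q^{r-|\theta|+1}a$, together with $(b;q)_{\theta_I}/(bq^{r-|\theta|};q)_{\theta_I-r+|\theta|}=(b;q)_{r-|\theta|}$ applied to $b=qa/t$ and $b=a$, collapses the six $D_\theta$-Pochhammers (three of subscript $\theta_I$, three of subscript $\theta_I-r+|\theta|$) into exactly the four $(r-|\theta|)$-indexed Pochhammers of the $\tilde{d}$-factor $(tv_I/v_k;q)_{r-|\theta|}(qu_I/tv_k;q)_{r-|\theta|}/\bigl((qv_I/v_k;q)_{r-|\theta|}(u_I/v_k;q)_{r-|\theta|}\bigr)$.

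The main obstacle is bookkeeping rather than any deep argument: one must carefully track which Pochhammers merge via splitting (for $I<k$) versus flip via reflection (for $I\ge k$), and confirm that the three $(\theta_I-r+|\theta|)$-indexed factors of $D_\theta$ supply exactly the tail needed to promote each subscript from $\theta_I$ to $r-|\theta|$ with no residual scalar. A useful sanity check throughout is to verify the claim in the smallest nontrivial case $n=2$, $k=1$, $|\theta|=r=1$, where it reduces to a short rational-function identity.
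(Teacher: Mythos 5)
Your proposal is correct and follows essentially the same route as the paper's own proof: direct substitution of the $w_i$, termwise matching of the trivially corresponding factors (diagonal and double products), and reduction of the remaining prefactor, the $i<k$ factors, and the $j\ge k+2$ factors to the target via the standard splitting identity $(a;q)_{m+n}=(a;q)_m(aq^m;q)_n$ and the reflection $\frac{(aq^{-n};q)_n}{(bq^{-n};q)_n}=\frac{(q/a;q)_n}{(q/b;q)_n}(a/b)^n$, which are exactly the two identities the paper invokes. The only difference is cosmetic (the order in which splitting and reflection are applied, and a minor miscount of how many Pochhammer symbols collapse for $i<k$), so no further comment is needed.
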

\begin{proof}
Merely by substitution, and using $v_k=q^{r-|\ta|}u_k$, we only have to prove
\begin{multline*}
(q/t)^{|\ta|}\,
\frac{(q^{-r};q)_{|\ta|}}{(q^{1-r}/t;q)_{|\ta|}}\,
\prod_{j=k+2}^{n+1}
\frac{(q^{|\ta|-r+1}u_{j}/tu_k;q)_{\ta_j}}
{(q^{|\ta|-r}u_{j}/u_k;q)_{\ta_j}}\,\frac{(t^2u_{j}/u_k;q)_{\ta_j}}{(qtu_{j}/u_k;q)_{\ta_j}}\\\times
\prod_{i=1}^{k-1}
\frac{(q^{|\ta|-r+1}u_i/tu_k;q)_{\ta_i}}
{(q^{|\ta|-r}u_i/u_k;q)_{\ta_i}}\,
\frac{(tu_i/u_k;q)_{\ta_i}}{(qu_i/u_k;q)_{\ta_i}}\,
\frac{(qu_i/u_k;q)_{\ta_i-r+|\ta|}}{(tu_i/u_k;q)_{\ta_i-r+|\ta|}}\,
\frac{(u_i/u_k;q)_{\ta_i-r+|\ta|}}{(qu_i/tu_k;q)_{\ta_i-r+|\ta|}}=\\
\frac{(q;q)_r}{(t;q)_r}\,
\frac{(t;q)_{r-|\ta|}}{(q;q)_{r-|\ta|}}\,
\prod_{i=1}^{k-1}
\frac{(tv_i/q^{r-|\ta|}u_k;q)_{r-|\ta|}}{(qv_i/q^{r-|\ta|}u_k;q)_{r-|\ta|}}\,
\frac{(qu_i/tq^{r-|\ta|}u_k;q)_{r-|\ta|}}{(u_i/q^{r-|\ta|}u_k;q)_{r-|\ta|}}\\\times
\prod_{j=k+2}^{n+1}
\frac{(tq^{r-|\ta|}u_k/v_j;q)_{\ta_j}}{(q^{r-|\ta|+1}u_k/v_j;q)_{\ta_j}}\,
\frac{(qu_k/t^2v_j;q)_{\ta_j}}{(u_k/tv_j;q)_{\ta_j}}.
\end{multline*}
We have obviously
\[\frac{(q^{|\ta|-r+1}u_i/tu_k;q)_{\ta_i}}
{(q^{|\ta|-r}u_i/u_k;q)_{\ta_i}}\,
\frac{(u_i/u_k;q)_{\ta_i-r+|\ta|}}{(qu_i/tu_k;q)_{\ta_i-r+|\ta|}}=\frac{(qu_i/tq^{r-|\ta|}u_k;q)_{r-|\ta|}}{(u_i/q^{r-|\ta|}u_k;q)_{r-|\ta|}}.\]
Using the identities
\begin{align*}
\frac{(aq^{-n};q)_n}{(bq^{-n};q)_n}&=\frac{(q/a;q)_n}{(q/b;q)_n} \,(a/b)^n,\\
\frac{(a;q)_n}{(b;q)_n}\,\frac{(b;q)_{n-k}}{(a;q)_{n-k}}&=
\frac{(q^{1-n}/a;q)_k}{(q^{1-n}/b;q)_k} \,(a/b)^k,
\end{align*}
we get
\begin{align*}
\frac{(tu_i/u_k;q)_{\ta_i}}{(qu_i/u_k;q)_{\ta_i}}\,
\frac{(qu_i/u_k;q)_{\ta_i-r+|\ta|}}{(tu_i/u_k;q)_{\ta_i-r+|\ta|}}&=\frac{(q^{1-\ta_i}u_k/tu_i;q)_{r-|\ta|}}{(q^{-\ta_i}u_k/u_i;q)_{r-|\ta|}} \,(t/q)^{r-|\ta|}\\
&=\frac{(tv_i/q^{r-|\ta|}u_k;q)_{r-|\ta|}}{(qv_i/q^{r-|\ta|}u_k;q)_{r-|\ta|}}.
\end{align*}
Similarly we obtain
\begin{align*}
(t/q)^{\ta_j}\,\frac{(q^{|\ta|-r+1}u_{j}/tu_k;q)_{\ta_j}}
{(q^{|\ta|-r}u_{j}/u_k;q)_{\ta_j}}&=\frac{(tq^{r-|\ta|}u_k/v_j;q)_{\ta_j}}{(q^{r-|\ta|+1}u_k/v_j;q)_{\ta_j}}\\
(q/t)^{\ta_j}\,\frac{(t^2u_{j}/u_k;q)_{\ta_j}}{(qtu_{j}/u_k;q)_{\ta_j}}&=\frac{(qu_k/t^2v_j;q)_{\ta_j}}{(u_k/tv_j;q)_{\ta_j}}.\tag*{\hspace{-1em}\qedhere}
\end{align*}
\end{proof}

{}Finally we have proved the following Pieri formula.
\begin{Theorem}\label{theopierimac}
Let  $\la=\sum_{i=1}^{n} \la_i\om_{i}$ be a dominant weight
and $r\in\mathbb{N}$. Assume $\la_k=0$ for some fixed $1\le k\le n$.
Define
\begin{equation*}
u_i=\begin{cases}
q^{-r}t^{-2},&\qquad i=0,\\
q^{-r+\sum_{j=i}^{k-1}\la_j}t^{k-i-1},&\qquad 1\le i\le k-1,\\
q^{-r-\sum_{j=k+1}^{i+1}\la_j}t^{k-i-3},&\qquad k\le i\le n-1.
\end{cases}
\end{equation*}
We have
\[P_{r\om_1}\,P_{\la}=
\sum_{\begin{subarray}{c}\ta=(\ta_1,\ldots,\ta_{n-1})\in \mathbb{N}^{n-1} \\
|\ta|\le r\end{subarray}}
D_{\ta} (u_0,u_1,\ldots,u_{n-1};k,r)\, P_{\la+\rho},\]
with
\[\rho=\sum_{i=1}^{k-2} (\ta_i-\ta_{i+1})\om_i+
\ta_{k-1}\,\om_{k-1}+(r-|\ta|)(\om_{k}-\om_{k-1})
-\ta_{k}\,\om_{k+1}+
\sum_{i=k+2}^n (\ta_{i-2}-\ta_{i-1})\om_i.\]
\end{Theorem}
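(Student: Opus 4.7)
The plan is to deduce Theorem~\ref{theopierimac} by concatenating the three preparatory reductions carried out just before its statement, so the proof is essentially bookkeeping on top of Theorem~\ref{theopieri}.

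First, I would start from the Pieri expansion of Theorem~\ref{theopieri}, in which $\ta\in\mathbb{N}^{n+1}$ is constrained by $|\ta|=r$. Solving this constraint for $\ta_k$ (so $\ta_k=r-|\ta|$ with $|\ta|$ now denoting the sum of the remaining components) turns the sum into one over $\ta=(\ta_1,\dots,\ta_{k-1},0,\ta_{k+1},\dots,\ta_{n+1})\in\mathbb{N}^n$ with $|\ta|\le r$, and produces the intermediate coefficient $\hat d_\ta$ displayed in the paper, the condition $|\ta|\le r$ being enforced automatically by the factor $1/(q;q)_{r-|\ta|}$. Next I would impose the hypothesis $\la_k=0$, in which case $u_k=tu_{k+1}$ and $v_{k+1}=q^{\ta_{k+1}}u_{k+1}$, so the factor $(qu_k/tv_{k+1};q)_{\ta_{k+1}}=(q^{1-\ta_{k+1}};q)_{\ta_{k+1}}$ reduces to $\delta_{\ta_{k+1},0}$. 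This forces $\ta_{k+1}=0$, collapses the sum to the $\tilde d_\ta$-form over $\ta\in\mathbb{N}^{n-1}$, and simplifies the weight to the expression given immediately before the Lemma.

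At this stage the Lemma applies directly: it rewrites $\tilde d_\ta$ as $D_\ta(w_0,\dots,w_{n-1};k,r)$ under the explicit substitution of its $w_i$ in terms of the $u_i$ of Theorem~\ref{theopieri}. Plugging $u_i=q^{\sum_{j=i}^n\la_j}t^{-i}$ into those formulas and using $\la_k=0$ to split $\sum_{j=i}^n\la_j=\sum_{j=i}^{k-1}\la_j+\sum_{j=k+1}^n\la_j$, I would simplify the resulting powers of $q$ and $t$ and check that they reproduce exactly the piecewise definition of $u_i$ displayed in Theorem~\ref{theopierimac}. The three cases $i=0$, $1\le i\le k-1$, $k\le i\le n-1$ match the three cases in the Lemma's change of variables, so this reduces to balancing exponents.

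Finally I would verify that the weight $\rho$ appearing in the $\tilde d_\ta$-formula agrees with the one in Theorem~\ref{theopierimac} after the index shift that removes the two zero components of $\ta$, i.e.\ $(\ta_1,\dots,\ta_{k-1},0,0,\ta_k,\dots,\ta_{n-1})\mapsto(\ta_1,\dots,\ta_{n-1})$; this amounts to checking the $(\ta_i-\ta_{i+1})\om_i$ contributions separately for $i\le k-2$, $i=k-1$, $i\in\{k,k+1\}$, and $i\ge k+2$. The main obstacle is not depth but careful index bookkeeping: aligning the Lemma's $w_i$ with the piecewise $u_i$ of the Theorem, and tracking how the two forced zeros in $\ta$ shift the $\om_i$ indices in $\rho$. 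Once these matchings are done, the three-step chain (Theorem~\ref{theopieri} $\to$ $\hat d_\ta$-form $\to$ $\tilde d_\ta$-form $\to$ $D_\ta$-form) yields Theorem~\ref{theopierimac}.
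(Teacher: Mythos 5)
Your proposal is correct and follows exactly the route the paper takes: the paper's own ``proof'' of Theorem~\ref{theopierimac} is precisely the three-step chain laid out in the text preceding it (solve $|\ta|=r$ for $\ta_k$ to get the $\hat d_\ta$ form, use $\la_k=0$ to force $\ta_{k+1}=0$ via the factor $(q^{1-\ta_{k+1}};q)_{\ta_{k+1}}=\delta_{\ta_{k+1},0}$ and obtain the $\tilde d_\ta$ form, then apply the Lemma's relabelling to arrive at $D_\ta$), capped by the exponent check that substituting $u_i=q^{\sum_{j=i}^{n}\la_j}t^{-i}$ into the Lemma's $w_i$ reproduces the piecewise $u_i$ of the theorem. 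Nothing is missing.
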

\noindent \textit{Remark.}
For $k=1, 2$ (resp. $k=n$, $n-1$) the first (resp. the last) sum in the above expression of $\rho$ must be understood as zero. This convention will be kept in the next sections.

\section{A recurrence formula}
\label{secrec}

Given two multi-integers $\ba=(\ba_1,\dots,\ba_{n-1})$,
$\ka=(\ka_1,\dots,\ka_{n-1})\in{\mathbb Z}^{n-1}$, we write 
$\ba\ge\ka$ for $\ba_i\ge \ka_i$ $(1\le i \le n-1)$.
We say that an infinite $(n-1)$-dimensional matrix 
$F=(f_{\ba\ka})_{\ba,\ka\in{\mathbb Z}^{n-1}}$ 
is lower-triangular if $f_{\ba\ka}=0$ unless 
$\ba\ge\ka$. When all $f_{\ka\ka}\ne 0$, 
there exists a unique lower-triangular matrix
$G=(g_{\ka\ga})_{\ka,\ga\in{\mathbb Z}^{n-1}}$ such that 
\begin{equation*}
\sum_{\ba\ge\ka\ge\ga}
f_{\ba\ka}\,
g_{\ka\ga}=\delta_{\ba\ga},
\end{equation*}
for all $\ba,\ga\in{\mathbb Z}^{n-1}$,
where $\delta_{\ba\ga}$ is the usual Kronecker symbol.
We refer to $F$ and $G$ as mutually inverse.

Such a pair of infinite multidimensional inverse matrices
is given in the Appendix, as a corollary of~\cite[Theorem~2.7]{LS}
(and, in fact, equivalent to the latter).
This result is essential for our purpose.

Given $n$ indeterminates $(u_0,u_1,\ldots,u_{n-1})$,
$\ta=(\ta_1,\ldots,\ta_{n-1})\in \mathbb{N}^{n-1}$, and
$k,r\in\mathbb{N}$ with $1\le k\le n$, we define
\begin{multline*}
C_{\ta_1,\ldots,\ta_{n-1}} (u_0,u_1,\ldots,u_{n-1};k,r)=\\
q^{|\ta|}\,
\frac{(t^2u_0;q)_{|\ta|}}{(qtu_0;q)_{|\ta|}}\,
\prod_{i=1}^{n-1}
\frac{(q/t;q)_{\ta_i}}{(q;q)_{\ta_i}}\,
\frac{(qu_i;q)_{\ta_i}}{(qtu_i;q)_{\ta_i}} \,
\prod_{1\le i<j\le n-1}\frac{(qv_i/tv_j;q)_{\ta_j}}
{(qv_i/v_j;q)_{\ta_j}}\,
\frac{(tu_i/v_j;q)_{\ta_j}}
{(u_i/v_j;q)_{\ta_j}}\\\times
\prod_{i=1}^{k-1}\frac{(u_i/tu_0;q)_{\ta_i}}{(qu_i/t^2u_0;q)_{\ta_i}}\,
\frac{(qtu_0/u_i;q)_r}{(t^2u_0/u_i;q)_r}\,
\frac{(tu_0/u_i;q)_r}{(qu_0/u_i;q)_r}\,
\prod_{i=k}^{n-1}\frac{(tu_i/u_0;q)_{\ta_i}}{(qu_i/u_0;q)_{\ta_i}}\,
\\\times
\frac{1}{\Delta(v)}\,\det_{1\le i,j\le n-1}\!\Bigg[v_i^{n-j-1}
\Bigg(1-t^{j-1}\frac{1-tv_i}{1-v_i}
\prod_{s=1}^{n-1}\frac{v_i-u_s}
{v_i-tu_s}\Bigg)\Bigg],
\end{multline*}
with $\Delta(v)$ the Vandermonde determinant
$\prod_{1\le i < j \le n-1} (v_i-v_j)$.
Here is our main result.
\begin{Theorem}
Let  $\la=\sum_{i=1}^{n} \la_i\om_{i}$ be a dominant weight.
Assume $\la_k=0$ for some fixed $1\le k\le n$.
For any positive integer $r\le \la_{k-1}$ the weight
\[\la^{(r)}=\la+r(\om_{k}-\om_{k-1})=\la+r\varepsilon_k\]
is dominant. Define
\begin{equation*}
u_i=\begin{cases}
q^{-r}t^{-2},&\qquad i=0,\\
q^{-r+\sum_{j=i}^{k-1}\la_j}t^{k-i-1},&\qquad 1\le i\le k-1,\\
q^{-r-\sum_{j=k+1}^{i+1}\la_j}t^{k-i-3},&\qquad k\le i\le n-1.
\end{cases}
\end{equation*}
We have
\[P_{\la^{(r)}}=
\sum_{\begin{subarray}{c}\ta=(\ta_1,\ldots,\ta_{n-1})\in \mathbb{N}^{n-1} \\
|\ta|\le r\end{subarray}}
C_{\ta} (u_0,u_1,\ldots,u_{n-1};k,r)\,P_{(r-|\ta|)\om_1}\,P_{\la+\rho},\]
with
\[\rho=\sum_{i=1}^{k-2} (\ta_i-\ta_{i+1})\om_i+
\ta_{k-1}\,\om_{k-1}-\ta_{k}\,\om_{k+1}+
\sum_{i=k+2}^n (\ta_{i-2}-\ta_{i-1})\om_i.\]
\end{Theorem}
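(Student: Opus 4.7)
The plan is to invert the Pieri expansion of Theorem~\ref{theopierimac} by recognizing it as a lower-triangular infinite multidimensional matrix equation and then applying the explicit inverse provided in the Appendix.

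First, I will repackage Theorem~\ref{theopierimac}. Fix $\la$ with $\la_k=0$, and for $(\alpha_0,\alpha')\in\mathbb{N}\times\mathbb{N}^{n-1}$ set
$$A_{(\alpha_0,\alpha')}=P_{\alpha_0\om_1}\,P_{\la+\rho'(\alpha')},\qquad B_{(\alpha_0,\alpha')}=P_{\la+\rho'(\alpha')+\alpha_0(\om_k-\om_{k-1})},$$
where $\rho'(\alpha')$ denotes the $\alpha_0$-independent part of the shift $\rho$ of Theorem~\ref{theopierimac} (which already coincides with the $\rho$ in the statement to be proved). Since $\rho'(\alpha')$ has zero $\om_k$-coefficient, $\la+\rho'(\alpha')$ still has vanishing $k$-th component, so Theorem~\ref{theopierimac} applies to this weight and gives
$$A_{(\alpha_0,\alpha')}=\sum_{|\ta|\le\alpha_0}D_\ta\!\left(u(\la,\alpha');\,k,\alpha_0\right)\,B_{(\alpha_0-|\ta|,\,\alpha'+\ta)}.$$
After the substitution $\beta_0=\alpha_0-|\ta|$, $\beta'=\alpha'+\ta$, this reads $A_\alpha=\sum_\beta F_{\alpha\beta}B_\beta$ for an infinite multidimensional matrix $F$ supported on $\beta'\ge\alpha'$ (componentwise) subject to the degree-preservation $\alpha_0+|\alpha'|=\beta_0+|\beta'|$. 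Restricted to each fixed total-degree slice, $F$ is a finite triangular matrix with nonzero diagonal entries $D_0\ne0$, hence invertible.

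Second, I will apply the multidimensional matrix inverse from the Appendix (equivalent to~\cite[Theorem~2.7]{LS}) to obtain an explicit inverse $G=(g_{\beta\gamma})$ with $\sum_\beta F_{\alpha\beta}g_{\beta\gamma}=\delta_{\alpha\gamma}$, yielding the dual expansion $B_\beta=\sum_\gamma g_{\beta\gamma}A_\gamma$. Specializing $\beta=(r,0)$ produces $B_{(r,0)}=P_{\la^{(r)}}$ (dominance for $r\le\la_{k-1}$ is immediate since $\la^{(r)}_{k-1}=\la_{k-1}-r$ and $\la^{(r)}_k=r$), and re-indexing $\gamma=(r-|\ta|,\ta)$ with $|\ta|\le r$ yields
$$P_{\la^{(r)}}=\sum_{|\ta|\le r}g_{(r,0),\,(r-|\ta|,\ta)}\,P_{(r-|\ta|)\om_1}\,P_{\la+\rho'(\ta)},$$
which is exactly the structure claimed.

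The principal obstacle will be the explicit identification $g_{(r,0),(r-|\ta|,\ta)}=C_\ta(u_0,\ldots,u_{n-1};k,r)$. This demands matching the parametrization $u_i$ of Theorem~\ref{theopierimac} with the variables of the Appendix's inverse, carrying out systematic manipulations of $q$-shifted factorials (in the spirit of the identities $(aq^{-n};q)_n/(bq^{-n};q)_n=(q/a;q)_n/(q/b;q)_n\,(a/b)^n$ and the companion formula invoked in the proof of the Lemma), and recognizing the Vandermonde-normalized determinant factor in $C_\ta$ as the characteristic signature of the multidimensional inverse supplied by the Appendix. Once this cosmetic but technical identification is carried out, the theorem follows.
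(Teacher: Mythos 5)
Your proposal follows essentially the same route as the paper: both recast the Pieri expansion of Theorem~\ref{theopierimac}, applied to the family of weights $\la+\rho'(\ga)$ with degree $r-|\ga|$, as a lower-triangular multidimensional matrix system and invert it using the explicit inverse pair of the Appendix, the coefficients $C_{\ta}(u_0,\ldots,u_{n-1};k,r)$ being exactly the entries $f_{\ta 0}$ of that inverse. The only differences are bookkeeping ones --- you carry a redundant degree coordinate $\alpha_0$ that the paper absorbs into the shifted argument $r-|\ka|$ of $C$ and $D$ --- so the argument is correct as outlined.
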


\noindent \textit{Remark.}
The weight $\la+\rho$ has no component on $\om_k$. Further, similarly
as in Theorem~\ref{theopieri} (see the Remark following the proof of that
theorem), the condition $\la+\rho\in P^+$ is necessarily satisfied
in Theorem~\ref{theorec} as soon as
$C_{\ta} (u_0,u_1,\ldots,u_{n-1};k,r)\neq 0$.
We omit the details which involve a tedious case-by-case analysis.

\begin{proof}
We make use of the multidimensional matrix inverse
given in the Appendix.
Let $\ba=(\ba_1,\dots,\ba_{n-1})$,
$\ka=(\ka_1,\dots,\ka_{n-1})$,
$\ga=(\ga_1,\dots,\ga_{n-1})\in\mathbb Z^{n-1}$.
If we define
\begin{align*}
f_{\ba\ka}&=C_{\ba_1-\ka_1,\ldots,\ba_{n-1}-\ka_{n-1}}
\big(q^{|\ka|}u_0,q^{\ka_1+|\ka|}u_1,\ldots,
q^{\ka_{n-1}+|\ka|}u_{n-1};k,r-|\ka|),\\
g_{\ka\ga}&=D_{\ka_1-\ga_1,\ldots,\ka_{n-1}-\ga_{n-1}}
\big(q^{|\ga|}u_0,q^{\ga_1+|\ga|}u_1,\ldots,
q^{\ga_{n-1}+|\ga|}u_{n-1};k,r-|\ga|\big),
\end{align*}
by this result, the infinite lower-triangular
multidimensional matrices $(f_{\ba\ka})_{\ba,\ka\in{\mathbb Z}^{n-1}}$
and $(g_{\ka\ga})_{\ka,\ga\in{\mathbb Z}^{n-1}}$ are
mutually inverse.

Now let us replace in Theorem~\ref{theopierimac}
$\la_i$ by $\la_i+\ga_i-\ga_{i+1}$
for $1\le i\le k-2$,
$\la_{k-1}$ by $\la_{k-1}+\ga_{k-1}$, 
$\la_{k+1}$ by $\la_{k+1}-\ga_k$, 
$\la_i$ by $\la_i+\ga_{i-2}-\ga_{i-1}$ for $k+2\le i\le n$,
$r$ by $r-|\gamma|$, respectively. Then $u_0$ is replaced by $q^{|\ga|}u_0$,
and $u_i$ by $q^{\gamma_i+|\gamma|}u_i$ for $1 \le i \le n-1$.
In explicit terms, we are considering the identity
\[P_{(r-|\ga|)\om_1}\,P_{\la+\tilde{\gamma}}=
\sum_{\begin{subarray}{c}\ta=(\ta_1,\ldots,\ta_{n-1})\in \mathbb{N}^{n-1} \\
|\ta|\le r\end{subarray}}
D_{\ta} (q^{|\ga|}u_0,q^{\gamma_1+|\gamma|}u_1,\ldots,q^{\gamma_{n-1}+|\gamma|}u_{n-1};k,r-|\gamma|)\, P_{\la+\tilde{\gamma}+\rho},\]
with
\begin{equation*}
u_i=\begin{cases}
q^{-r}t^{-2},&\qquad i=0,\\
q^{-r+\sum_{j=i}^{k-1}\la_j}t^{k-i-1},&\qquad 1\le i\le k-1,\\
q^{-r-\sum_{j=k+1}^{i+1}\la_j}t^{k-i-3},&\qquad k\le i\le n-1,
\end{cases}
\end{equation*}
and
\begin{align*}
\rho&=\sum_{i=1}^{k-2} (\ta_i-\ta_{i+1})\om_i+
\ta_{k-1}\,\om_{k-1}+(r-|\ta|)(\om_{k}-\om_{k-1})
-\ta_{k}\,\om_{k+1}+
\sum_{i=k+2}^n (\ta_{i-2}-\ta_{i-1})\om_i,\\
\tilde{\gamma}&=\sum_{i=1}^{k-2}
 (\ga_i-\ga_{i+1})\om_{i}+\ga_{k-1}\,\om_{k-1}-\ga_k\,\om_{k+1}+
\sum_{i=k+2}^n(\ga_{i-2}-\ga_{i-1})\om_i.
\end{align*}

After substituting the summation indices $\ta_i\mapsto\ka_i-\ga_i$
for $1\le i\le n-1$,
we obtain exactly
\begin{equation*}
\sum_{\kappa\in\mathbb Z^{n-1}}g_{\kappa\gamma}y_{\kappa}=
w_{\gamma}\qquad\qquad (\gamma \in\mathbb Z^{n-1}),
\end{equation*}
with
\[y_{\kappa}=P_{\la+\tilde{\kappa}},\quad \quad
w_{\gamma}=P_{(r-|\gamma|)\om_1}\,P_{\la+\tilde{\gamma}},
\]
and
\begin{equation*}
\tilde{\kappa}=\sum_{i=1}^{k-2}
 (\ka_i-\ka_{i+1})\om_{i}+\ka_{k-1}\,\om_{k-1}+
(r-|\ka|)(\om_{k}-\om_{k-1})-\ka_k\,\om_{k+1}+
\!\sum_{i=k+2}^n(\ka_{i-2}-\ka_{i-1})\om_i.
\end{equation*}

This immediately yields the inverse relation
\begin{equation*}
\sum_{\beta\in\mathbb Z^{n-1}}f_{\beta\kappa}w_{\beta}=y_{\kappa}\qquad\qquad
(\kappa \in\mathbb Z^{n-1}).
\end{equation*}
We conclude by setting $\kappa_i=0$ for all $1\le i \le n-1$.
\end{proof}

{}Finally, by the substitutions 
$r\rightarrow\la_k$ and $\la_{k-1}\rightarrow\la_{k-1}+\la_k$,
we obtain the following very remarkable expansion.

\begin{Theorem}\label{theorec}
Let  $\la=\sum_{i=1}^{n} \la_i\om_{i}$ be a dominant weight and
$k \in \mathbb{N}$ fixed with $1\le k\le n$. Define
\begin{equation*}
u_i=\begin{cases}
q^{-\la_k}t^{-2},&\qquad i=0,\\
q^{\sum_{j=i}^{k-1}\la_j}t^{k-i-1},&\qquad 1\le i\le k-1,\\
q^{-\sum_{j=k}^{i+1}\la_j}t^{k-i-3},&\qquad k\le i\le n-1,
\end{cases}
\end{equation*}
and $\mu=\la-\la_k\,(\om_{k}-\om_{k-1})=\la-\la_k\,\varepsilon_k$. We have
\[P_{\la}=
\sum_{\begin{subarray}{c}\ta=(\ta_1,\ldots,\ta_{n-1})\in \mathbb{N}^{n-1} \\
|\ta|\le \la_k\end{subarray}}
C_{\ta} (u_0,u_1,\ldots,u_{n-1};k,\la_k)\,P_{(\la_k-|\ta|)\om_1}\,
P_{\mu+\rho},\]
with
\[\rho=\sum_{i=1}^{k-2} (\ta_i-\ta_{i+1})\om_i
+\ta_{k-1}\,\om_{k-1}-\ta_{k}\,\om_{k+1}+
\sum_{i=k+2}^n (\ta_{i-2}-\ta_{i-1})\om_i.\] 
\end{Theorem}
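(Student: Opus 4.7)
The plan is to derive Theorem~\ref{theorec} as an immediate corollary of the preceding (unnumbered) recurrence theorem, which establishes the expansion in the special case of a dominant weight whose $k$-th coefficient vanishes. The idea is to reinterpret a general $\la$ as $\mu+\la_k\varepsilon_k$, where $\mu=\la-\la_k\varepsilon_k$ satisfies $\mu_k=0$ by construction.

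Concretely, given $\la=\sum_{i=1}^n\la_i\om_i$, I would define $\mu$ by $\mu_j=\la_j$ for $j\notin\{k-1,k\}$, $\mu_{k-1}=\la_{k-1}+\la_k$, and $\mu_k=0$; note that $\mu+\la_k(\om_k-\om_{k-1})=\la$. I then apply the preceding theorem to the weight $\mu$ with $r=\la_k$: the dominance condition $r\le\mu_{k-1}$ becomes $\la_k\le\la_{k-1}+\la_k$, which is automatic; the left-hand side $P_{\mu^{(r)}}$ becomes $P_{\la}$; the summation index $\ta$ and the weight $\rho$ carry over unchanged (the range $|\ta|\le r$ now reads $|\ta|\le\la_k$); and $P_{\mu+\rho}$ is exactly the factor appearing on the right-hand side of Theorem~\ref{theorec}.

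It remains to verify that the explicit parameters $u_i$ transform correctly. For $i=0$, $q^{-r}t^{-2}$ becomes $q^{-\la_k}t^{-2}$. For $1\le i\le k-1$, the exponent of $q$ in $u_i$ is $-r+\sum_{j=i}^{k-1}\mu_j=-\la_k+\sum_{j=i}^{k-2}\la_j+(\la_{k-1}+\la_k)=\sum_{j=i}^{k-1}\la_j$. For $k\le i\le n-1$, the exponent is $-r-\sum_{j=k+1}^{i+1}\mu_j=-\la_k-\sum_{j=k+1}^{i+1}\la_j=-\sum_{j=k}^{i+1}\la_j$. The powers of $t$, namely $t^{k-i-1}$ and $t^{k-i-3}$, depend only on the indices and are unaffected. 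These match the prescriptions in Theorem~\ref{theorec} exactly, so the coefficient $C_{\ta}(u_0,\ldots,u_{n-1};k,\la_k)$ is correctly produced.

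The main obstacle lies not in this corollary step but in the preceding theorem itself, whose proof requires inverting the Pieri formula (Theorem~\ref{theopierimac}) via the multidimensional matrix inverse of the Appendix, an identification that in turn relies on careful matching of the $D_{\ta}$ and $C_{\ta}$ kernels to the abstract inverse pair. Once that inversion is in hand, the passage to Theorem~\ref{theorec} is a routine relabeling of the parameters.
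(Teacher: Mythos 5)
Your proposal is correct and follows exactly the paper's own route: the paper obtains Theorem~\ref{theorec} from the preceding (unnumbered) recurrence theorem precisely by the substitutions $r\to\la_k$ and $\la_{k-1}\to\la_{k-1}+\la_k$, which is the relabeling you carry out and verify on the parameters $u_i$. Your parameter checks are accurate, so nothing further is needed beyond the preceding theorem, whose proof by inversion of the Pieri matrix you correctly identify as the real content.
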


\noindent \textit{Remark.} Observe that the weights $\mu$ and
$\mu+\rho$ have no component on $\om_k$. 

\smallskip
The $k=n$ special case is worth writing out explicitly.
\begin{Coro}
Let  $\la=\sum_{i=1}^{n} \la_i\om_{i}$ be a dominant weight.
Define $u_0=q^{-\la_n}t^{-2}$ and $u_i=q^{\sum_{l=i}^{n-1}\la_l}t^{n-i-1}$
$(1\le i\le n-1)$. We have
\begin{equation*}
P_{\la}=
\sum_{\begin{subarray}{c}\ta=(\ta_1,\ldots,\ta_{n-1})\in \mathbb{N}^{n-1} \\
|\ta|\le \la_n\end{subarray}}
C_{\ta} (u_0,u_1,\ldots,u_{n-1};n,\la_n)\,
P_{(\la_{n}-|\ta|)\om_1}\,P_{\mu},
\end{equation*}
with $\mu=\sum_{i=1}^{n-2} (\la_i+\ta_i-\ta_{i+1})\om_i+
(\la_{n-1}+\la_{n}+\ta_{n-1})\om_{n-1}$.
\end{Coro}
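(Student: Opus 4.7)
The plan is to obtain the Corollary as a direct specialization of Theorem~\ref{theorec} to $k=n$, checking that every object in the general statement collapses to the expressions appearing in the Corollary. There is no new calculation required; the entire task is bookkeeping about which index ranges become empty and how the $\om_{n+1}=0$ convention is used.

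First I would verify the data $u_0,u_1,\dots,u_{n-1}$. In the Theorem the indices $u_i$ are defined piecewise, with the third case covering $k\le i\le n-1$. For $k=n$ this range is empty, so only the first two cases contribute, and substituting $k=n$ into $u_0=q^{-\la_k}t^{-2}$ and $u_i=q^{\sum_{j=i}^{k-1}\la_j}t^{k-i-1}$ gives exactly the formulas stated in the Corollary.

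Next I would compute $\mu$ and $\rho$. Using $\varepsilon_n=\om_n-\om_{n-1}$,
\[
\mu=\la-\la_n\varepsilon_n=\sum_{i=1}^{n-2}\la_i\om_i+(\la_{n-1}+\la_n)\om_{n-1},
\]
so the $\om_n$ component cancels as predicted by the remark following Theorem~\ref{theorec}. For $\rho$, when $k=n$ the term $-\ta_k\om_{k+1}$ becomes $-\ta_n\om_{n+1}$ which vanishes by the paper's convention $\om_{n+1}=0$, and the second sum $\sum_{i=k+2}^{n}$ is empty; what remains is
\[
\rho=\sum_{i=1}^{n-2}(\ta_i-\ta_{i+1})\om_i+\ta_{n-1}\om_{n-1}.
\]
Adding, $\mu+\rho=\sum_{i=1}^{n-2}(\la_i+\ta_i-\ta_{i+1})\om_i+(\la_{n-1}+\la_n+\ta_{n-1})\om_{n-1}$, which is exactly the weight called $\mu$ in the statement of the Corollary. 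Substituting these specializations into the expansion of Theorem~\ref{theorec} yields the identity to be proved.

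I do not expect any real obstacle. The only potentially confusing point is the mild notational clash between the $\mu$ used in Theorem~\ref{theorec} (which denotes $\la-\la_k\varepsilon_k$) and the $\mu$ used in the Corollary (which denotes that same weight shifted by $\rho$); I would flag this explicitly so the reader does not conflate them. Everything else is a mechanical verification that index ranges become empty in the right way.
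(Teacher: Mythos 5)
Your proposal is correct and matches the paper exactly: the Corollary is simply Theorem~\ref{theorec} specialized to $k=n$, and your bookkeeping (the empty index ranges, $\om_{n+1}=0$ killing the $-\ta_k\om_{k+1}$ term, and the identification of $\mu+\rho$ with the Corollary's $\mu$) is precisely the verification the paper leaves implicit.
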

The reader may check that this is exactly Theorem~4.1 of \cite{LS}
(with $n\mapsto n-1$),
written for $x_1\cdots x_{n+1}=1$, up to the normalization
$Q_{\la}= b_{\la} \,P_{\la}$ with
\begin{equation*}
b_{\la}=\prod_{1\le i\le j\le n}
\frac{(q^{\sum_{l=i}^{j-1}\la_l}t^{j-i+1};q)_{\la_j}}
{(q^{1+\sum_{l=i}^{j-1}\la_l}t^{j-i};q)_{\la_j}}=
\prod_{1\le i\le j\le n}
\frac{(tu_i/u_j;q)_{\la_j}}
{(qu_i/u_j;q)_{\la_j}},
\end{equation*}
where we set $u_n=1/t$.

\section{Examples}
\label{secex}

In this section we write out the formulas in Theorem~\ref{theorec}
explicitly for $n=2,3$.

\subsection{The root system $A_2$} 

{}For $k=2$ we have $u_0=q^{-\la_2}/t^2$, $u_1=q^{\la_1}$, and
\begin{multline*}
C_\ta(u_0,u_1;2,r)=
q^{\ta}\,
\frac{(t^2u_0;q)_\ta}{(qtu_0;q)_\ta}\,
\frac{(q/t;q)_\ta}{(q;q)_\ta}\,
\frac{(qu_1;q)_\ta}{(qtu_1;q)_\ta} \,
\frac{(u_1/tu_0;q)_\ta}{(qu_1/t^2u_0;q)_\ta}\\\times
\frac{(qtu_0/u_1;q)_r}{(t^2u_0/u_1;q)_r}\,
\frac{(tu_0/u_1;q)_r}{(qu_0/u_1;q)_r}\,
\Bigg(1-\frac{1-tv_1}{1-v_1}\frac{v_1-u_1}
{v_1-tu_1}\Bigg).
\end{multline*}
After some simplifications, we obtain
\[P_{\la_1\om_1+\la_2\om_2}=\sum_{\ta\in \mathbb{N}}
C^{(2)}_{\ta}(\la)\,P_{(\la_2-\ta)\om_1}\,P_{(\la_1+\la_2+\ta)\om_1},\]
with
\begin{align*}
C_{\ta}^{(2)}(\la)&=C_\ta(u_0,u_1;2,\la_2)\\&=t^\ta\,
\frac{(q^{\la_2-\ta+1};q)_{\ta}}{(tq^{\la_2-\ta};q)_{\ta}}\,
\frac{(1/t;q)_\ta}{(q;q)_\ta}\,
\frac{(q^{\la_1+1};q)_{\ta}}{(tq^{\la_1+1};q)_{\ta}}\,
\frac{(tq^{\la_1};q)_{\la_2+\ta}}{(q^{\la_1+1};q)_{\la_2+\ta}}\,
\frac{(tq^{\la_1+1};q)_{\la_2}}{(t^2q^{\la_1};q)_{\la_2}}\,
\frac{1-q^{\la_1+2\ta}}{1-q^{\la_1+\ta}}.
\end{align*}

This result may be compared with the Jing--J\'{o}zefiak
classical result~\cite{JJ}, more precisely its restriction
to three variables $(x_1,x_2,x_3)$ subject to $x_1x_2x_3=1$.
Namely, given a partition $(\mu_1,\mu_2)$, the Macdonald
symmetric function $\mathcal{P}_{(\mu_1,\mu_2)}(q,t)$ is given by
\begin{equation*}
\mathcal{P}_{(\mu_1,\mu_2)}= \sum_{\ta\in\mathbb{N}}
\mathcal{C}_{\ta} (\mu) \, \mathcal{P}_{(\mu_{2}-\ta)} \, \mathcal{P}_{(\mu_1+\ta)},
\end{equation*}
with 
\begin{multline*}
\mathcal{C}_{\ta} (\mu)=
\frac{(tq^{\mu_1-\mu_2+1};q)_{\mu_2}}{(t^2q^{\mu_1-\mu_2};q)_{\mu_2}}\,
\frac{(q^{\mu_2-\ta+1};q)_{\ta}}{(tq^{\mu_2-\ta};q)_{\ta}}\,
\frac{(tq^{\mu_1-\mu_2};q)_{\mu_2+\ta}}
{(q^{\mu_1-\mu_2+1};q)_{\mu_2+\ta}}\\\times
t^{\ta} \,\frac{(1/t;q)_{\ta}}{(q;q)_{\ta}}\,
\frac{(q^{\mu_1-\mu_2+1};q)_{\ta}}{(tq^{\mu_1-\mu_2+1};q)_{\ta}}\,
\frac{1-q^{\mu_1-\mu_2+2\ta}}{1-q^{\mu_1-\mu_2+\ta}}.
\end{multline*}
Our formula is equivalent to the main result of \cite{JJ} by the correspondence
$\la_1=\mu_1-\mu_2$, $\la_2=\mu_2$ between dominant weights and partitions,
recalled in Section 2.

{}For $k=1$ we have $u_0=q^{-\la_1}/t^2$, $u_1=q^{-\la_1-\la_2}/t^3$, and
\[C_\ta(u_0,u_1;1,r)=
q^{\ta}\,
\frac{(t^2u_0;q)_{\ta}}{(qtu_0;q)_{\ta}}\,
\frac{(q/t;q)_{\ta}}{(q;q)_{\ta}}\,
\frac{(qu_1;q)_{\ta}}{(qtu_1;q)_{\ta}}\,
\frac{(tu_1/u_0;q)_{\ta}}{(qu_1/u_0;q)_{\ta}}\,
\Bigg(1-\frac{1-tv_1}{1-v_1}\frac{v_1-u_1}
{v_1-tu_1}\Bigg).\]
After some simplifications, we obtain
\[P_{\la_1\om_1+\la_2\om_2}=
\sum_{\ta\in \mathbb{N}}
C^{(1)}_{\ta}(\la)\,P_{(\la_1-\ta)\om_1}\,P_{(\la_2-\ta)\om_2},\]
with
\begin{align*}
C_{\ta}^{(1)}(\la)&=C_\ta(u_0,u_1;1,\la_1)\\&=
t^{\ta}\,\frac{(1/t;q)_\ta}{(q;q)_\ta}\,
\frac{(q^{\la_1};1/q)_\ta}{(tq^{\la_1-1};1/q)_\ta}\,
\frac{(q^{\la_2};1/q)_\ta}{(tq^{\la_2-1};1/q)_\ta}\,
\frac{(t^3q^{\la_1+\la_2-1};1/q)_\ta}{(t^2q^{\la_1+\la_2-1};1/q)_\ta}\,
\frac{1-t^3q^{\la_1+\la_2-2\ta}}{1-t^3q^{\la_1+\la_2-\ta}}.
\end{align*}
We thus recover exactly Perelomov, Ragoucy and Zaugg's result given in
\cite[Theorem~1(a)]{PRZ}.

\subsection{The root system $A_3$}

{}For $k=1,2,3$ our formulas in Theorem~\ref{theorec} write respectively as
\begin{align*}
P_{\la_1\om_1+\la_2\om_2+\la_3\om_3}&=
\sum_{(i,j)\in \mathbb{N}^2}
C^{(1)}_{ij}(\la)\,P_{(\la_1-i-j)\om_1}\,
P_{(\la_2-i)\om_2+(\la_3+i-j)\om_3},\\
&=\sum_{(i,j)\in \mathbb{N}^2}
C^{(2)}_{ij}(\la)\,P_{(\la_2-i-j)\om_1}\,
P_{(\la_1+\la_2+i)\om_1+(\la_3-j)\om_3},\\
&=\sum_{(i,j)\in \mathbb{N}^2}
C^{(3)}_{ij}(\la)\,P_{(\la_3-i-j)\om_1}\,
P_{(\la_1+i-j)\om_1+(\la_2+\la_3+j)\om_2}.
\end{align*}
In order to make these expansions explicit, we need to evaluate the determinant of the 2 by 2 matrix $A$ given by
\begin{equation*}
A_{kl}=v_k^{2-l}
\left(1-t^{l-1}\frac{1-tv_k}{1-v_k}
\frac{v_k-u_1}{v_k-tu_1}\,\frac{v_k-u_2}{v_k-tu_2}\right),
\end{equation*}
with $v_1=q^iu_1,v_2=q^ju_2$.

More precisely we need to compute the quotient of this determinant by the Vandermonde determinant $v_1-v_2=q^i u_1-q^j u_2$. There is no evidence this quotient may be written in canonical form. Inspired by the explicit result of~\cite[Theorem~1]{La} (see below), we write this quotient of determinants as
\begin{multline*}
\frac{\det\!\ A}{q^i u_1-q^j u_2}=
\frac{(t-1)^2}{(t-q^i)(t-q^j)}\Bigg(\frac{1-q^{2i}u_1}{1-q^iu_1}\,\frac{1-q^{2j}u_2}{1-q^ju_2}
\Big(1+t^{-1}\frac{1-q^i}{1-q^iu_1/tu_2}\,\frac{1-q^j} {1-q^ju_2/tu_1}\Big)\\
 -(q^iu_1+q^ju_2)\frac{1-q^i}{1-q^iu_1}\,
\frac{1-q^j}{1-q^ju_2}\, \frac{1-q^i/t}{1-q^iu_1/tu_2}\,
\frac{1-q^j/t}{1-q^ju_2/tu_1}\Bigg).
\end{multline*}
The above identity (which is not trivial) may be easily verified
by using any formal calculus software.

Next, for $(i,j)\in \mathbb{N}^2$ we define
\begin{multline*}
\nabla_{ij}(u_0,u_1,u_2)=\\
q^{i+j}\,
\frac{(t^2u_0;q)_{i+j}}{(qtu_0;q)_{i+j}}\,
\frac{(1/t;q)_i}{(q;q)_i}\,
\frac{(u_1;q)_i}{(qtu_1;q)_i}\,
\frac{(1/t;q)_j}{(q;q)_j}\,
\frac{(u_2;q)_j}{(qtu_2;q)_j}\,
\frac{(q^{i-j+1}u_1/tu_2;q)_j}
{(q^{i-j+1}u_1/u_2;q)_j}\,
\frac{(tq^{-j}u_1/u_2;q)_j}
{(q^{-j}u_1/u_2;q)_j}\\\times
\Bigg( \frac{1-q^{2i}u_1}{1-u_1}\,\frac{1-q^{2j}u_2}{1-u_2}
\Big(1+t^{-1}\frac{1-q^i}{1-q^iu_1/tu_2}\,\frac{1-q^j} {1-q^ju_2/tu_1}\Big)\\
 -(q^iu_1+q^ju_2)\frac{1-q^i}{1-u_1}\,
\frac{1-q^j}{1-u_2}\, \frac{1-q^i/t}{1-q^iu_1/tu_2}\,
\frac{1-q^j/t}{1-q^ju_2/tu_1}\Bigg).
\end{multline*}
It is readily verified that we have
\[
\frac{C_{ij} (u_0,u_1,u_2;1,r)}{\nabla_{ij}(u_0,u_1,u_2)}=
\frac{(tu_1/u_0;q)_i}{(qu_1/u_0;q)_i}
\frac{(tu_2/u_0;q)_j}{(qu_2/u_0;q)_j},
\]
\[
\frac{C_{ij} (u_0,u_1,u_2;2,r)}{\nabla_{ij}(u_0,u_1,u_2)}=
\frac{(u_1/tu_0;q)_i}{(qu_1/t^2u_0;q)_i}\,
\frac{(qtu_0/u_1;q)_r}{(t^2u_0/u_1;q)_r}\,
\frac{(tu_0/u_1;q)_r}{(qu_0/u_1;q)_r}\,
\frac{(tu_2/u_0;q)_j}{(qu_2/u_0;q)_j},
\]
\begin{multline*}
\frac{C_{ij} (u_0,u_1,u_2;3,r)}{\nabla_{ij}(u_0,u_1,u_2)}=
\frac{(u_1/tu_0;q)_i}{(qu_1/t^2u_0;q)_i}\,
\frac{(qtu_0/u_1;q)_r}{(t^2u_0/u_1;q)_r}\,
\frac{(tu_0/u_1;q)_r}{(qu_0/u_1;q)_r}\\\times
\frac{(u_2/tu_0;q)_j}{(qu_2/t^2u_0;q)_j}\,
\frac{(qtu_0/u_2;q)_r}{(t^2u_0/u_2;q)_r}\,
\frac{(tu_0/u_2;q)_r}{(qu_0/u_2;q)_r}.
\end{multline*}
Now, by Theorem~\ref{theorec} the respective recurrence coefficients
are determined to be
\begin{align*}
C^{(1)}_{ij}(\la)&=
C_{ij} (q^{-\la_1}/t^2,q^{-\la_1-\la_2}/t^3,
q^{-\la_1-\la_2-\la_3}/t^4;1,\la_1),\\
C^{(2)}_{ij}(\la)&=
C_{ij} (q^{-\la_2}/t^2,q^{\la_1},q^{-\la_2-\la_3}/t^3;2,\la_2),\\
C^{(3)}_{ij}(\la)&=
C_{ij} (q^{-\la_3}/t^2,q^{\la_1+\la_2}t,q^{\la_2};3,\la_3).
\end{align*}

The cases $k=1,2$ are new. For $k=3$ we recover the first author's
earlier result in \cite[Theorem~1]{La},
more precisely the restriction of this result to four variables
$(x_1,x_2,x_3,x_4)$ subject to $x_1x_2x_3x_4=1$. 
Namely given a partition $(\mu_1,\mu_2,\mu_3)$ and $u=q^{\mu_1-\mu_2}$, $v=q^{\mu_2-\mu_3}$, the Macdonald
symmetric function $\mathcal{P}_{(\mu_1,\mu_2,\mu_3)}(q,t)$ is given by
\begin{equation*}
\mathcal{P}_{(\mu_1,\mu_2,\mu_3)}= \sum_{(i,j)\in \mathbb{N}^2}  
\mathcal{C}_{ij} (\mu)  \, \mathcal{P}_{(\mu_3-i-j)}\,\mathcal{P}_{(\mu_1+i,\mu_2+j)},
\end{equation*}
with
\begin{align*}
\mathcal{C}_{ij} (\mu) &=t^{i+j}\, 
\frac{(1/t;q)_i}{(q;q)_i}\,\frac{(1/t;q)_j}{(q;q)_j}\, \frac{(tuv;q)_i}{(qt^2uv;q)_i}\,\frac{(v;q)_j}{(qtv;q)_j}\, 
\frac{{(q^{-j}t^2u;q)}_{i}}{{(q^{-j}tu;q)}_{i}} \,
\frac{{(qu;q)}_{i}}{{(qtu;q)}_{i}}\\ &\times
\frac{(t;q)_{\mu_1-\mu_{2}+i-j}}
{(q;q)_{\mu_1-\mu_{2}+i-j}}\,
\frac{(t;q)_{\mu_2+j}}
{(q;q)_{\mu_2+j}}\,
\frac{(t;q)_{\mu_3-i-j}}
{(q;q)_{\mu_3-i-j}}\,\frac{(q;q)_{\mu_1-\mu_{2}}}{(t;q)_{\mu_1-\mu_{2}}}\,
\frac{(q;q)_{\mu_2-\mu_{3}}}{(t;q)_{\mu_2-\mu_{3}}}\,
\frac{(q;q)_{\mu_3}}{(t;q)_{\mu_3}}
\\ &\times
\frac{(q^{i-j}t^{2}u;q)_{\mu_2+j}}
{(q^{i-j+1}tu;q)_{\mu_2+j}}\,
\frac{(qtu;q)_{\mu_2-\mu_{3}}}
{(t^{2}u;q)_{\mu_2-\mu_{3}}}\,
\frac{(qt^{2}uv;q)_{\mu_3}}{(t^{3}uv;q)_{\mu_3}}\,
\frac{(qtv;q)_{\mu_3}}{(t^{2}v;q)_{\mu_3}}\,
\frac{1-q^{2i}tuv}{1-tuv}\,\frac{1-q^{2j}v}{1-v}
\\ &\times
\Bigg(1 +u \, \frac{1-q^i}{1-q^{i}u} \, \frac{1-q^{-j}}{1-q^{-j}t^2u}  
\Big(t- v(q^itu+q^j) \frac{t-q^i}{1-q^{2i}tuv} \frac{t-q^j}{1-q^{2j}v} \Big)\Bigg).
\end{align*}
The reader may check our formula is indeed equivalent to
\cite[Theorem~1]{La} by using the correspondence
$\la_1=\mu_1-\mu_2$, $\la_2=\mu_2-\mu_3$, $\la_3=\mu_3$
between dominant weights and partitions.

\section{Final remark}

The Macdonald polynomial $P_\la$, $\la= \sum_{i=1}^n \la_i\om_i$, is in bijective correspondence with the symmetric function $\mathcal{P}_\mu(x_1,\ldots,x_{n+1})$ with $\mu=(\mu_1,\ldots, \mu_n)$, $\mu_i=\sum_{j=i}^n \la_j$, subject to the condition $x_1\cdots x_{n+1}=1$. Therefore the $n$ recurrence relations that we have obtained for $P_\la$ may be expressed in terms of $\mathcal{P}_\mu(x_1,\ldots,x_{n+1})$, subject to $x_1\cdots x_{n+1}=1$.

One may wonder whether this restriction can be removed. Equivalently, being given some fixed integer $1\le k \le n$, is it possible to expand the symmetric function $\mathcal{P}_\mu$ in terms of products $\mathcal{P}_{(r)}\mathcal{P}_\rho$ for partitions $\rho=(\rho_1,\ldots, \rho_n)$ satisfying $\rho_k=\rho_{k+1}$?

Such a development has been obtained in~\cite{LS} for $k=n$, in which case $\rho_n=\rho_{n+1}=0$. However this method cannot be used for other values of $k$.

Actually the Pieri expansion of $\mathcal{P}_{(r)}\mathcal{P}_\rho$ involves symmetric functions $\mathcal{P}_\sigma$ with $\sigma-\rho$ a horizontal $r$-strip. Hence some of these partitions $\sigma$ have length $l(\sigma)= n+1$. The only exception occurs for $k=n$ since in that case $\rho_n=0$ entails $l(\sigma) \le n$.

Therefore, except for $k=n$, the Pieri multiplication does not conserve the space generated by $\{\mathcal{P}_\kappa, l(\kappa) \le n\}$, and it is not possible to define a Pieri matrix to invert.

This difficulty does not arise in the $A_n$ framework. Then the Pieri matrix can be defined, because the condition $x_1\cdots x_{n+1}=1$ and the property~\cite[(4.17), p. 325]{Ma2}
\[\mathcal{P}_{(\sigma_1,\ldots,\sigma_{n+1})}(x_1,\ldots,x_{n+1})=
(x_1\cdots x_{n+1})^{\sigma_{n+1}}\,\mathcal{P}_{(\sigma_1-\sigma_{n+1},\ldots,\sigma_{n}-\sigma_{n+1},0)}(x_1,\ldots,x_{n+1})\]
allow to deal with partitions of length $n+1$. 
%\newpage

\section*{Appendix: A multidimensional matrix inverse}

The following result (equivalent to one previously given
in \cite{LS}) is crucial to obtain the recursion
formula in Section~\ref{secrec}.

\begin{Lemma}
Let $t,u_0,u_1,\dots,u_n$ be indeterminates and
$r,k\in \mathbb{N}$ with $1\le k\le n+1$. Define
\begin{multline*}
f_{\ba\ka}=
q^{|\ba|-|\ka|}\,
\frac{(t^2u_0;q)_{|\ba|}}{(qtu_0;q)_{|\ba|}}\,
\frac{(qtu_0;q)_{|\ka|}}{(t^2u_0;q)_{|\ka|}}\,
\prod_{i=1}^n\frac{(q/t;q)_{\ba_i-\ka_i}}{(q;q)_{\ba_i-\ka_i}}\,
\frac{(q^{\ka_i+|\ka|+1}u_i;q)_{\ba_i-\ka_i}}
{(q^{\ka_i+|\ka|+1}tu_i;q)_{\ba_i-\ka_i}}\\\times
\prod_{i=1}^{k-1}\frac{(u_i/tu_0;q)_{\ba_i}}{(qu_i/t^2u_0;q)_{\ba_i}}\,
\frac{(qu_i/tu_0;q)_{\ka_i}}{(u_i/u_0;q)_{\ka_i}}\,
\frac{(u_i/u_0;q)_{|\ka|-r+\ka_i}}{(qu_i/tu_0;q)_{|\ka|-r+\ka_i}}\,
\frac{(qu_i/t^2u_0;q)_{|\ka|-r+\ka_i}}{(u_i/tu_0;q)_{|\ka|-r+\ka_i}}
\\ \hspace{4 cm} \times\prod_{i=k}^n
\frac{(tu_i/u_0;q)_{\ba_i}}{(qu_i/u_0;q)_{\ba_i}}\,
\frac{(qu_i/u_0;q)_{\ka_i}}{(tu_i/u_0;q)_{\ka_i}}
\\\times
\prod_{1\le i<j\le n}\frac{(q^{\ba_i-\ba_j+1}u_i/tu_j;q)_{\ba_j-\ka_j}}
{(q^{\ba_i-\ba_j+1}u_i/u_j;q)_{\ba_j-\ka_j}}\,
\frac{(q^{\ka_i-\ba_j}tu_i/u_j;q)_{\ba_j-\ka_j}}
{(q^{\ka_i-\ba_j}u_i/u_j;q)_{\ba_j-\ka_j}}
\: \big(q^{\ba_i}u_i-q^{\ba_j}u_j\big)^{-1}
\\\times
\det_{1\le i,j\le n}\!\Bigg[\big(q^{\ba_i}u_i\big)^{n-j}\Bigg(1-
t^{j-1}
\frac{\big(1-q^{\ba_i+|\ka|}tu_i\big)}
{\big(1-q^{\ba_i+|\ka|}u_i\big)}
\prod_{s=1}^n\frac{\big(q^{\ba_i}u_i-q^{\ka_s}u_s\big)}
{\big(q^{\ba_i}u_i-q^{\ka_s}tu_s\big)}\Bigg)\Bigg],
\end{multline*}
and
\begin{multline*}
g_{\ka\ga}=
\Big(\frac qt\Big)^{|\ka|-|\ga|}\,
\frac{(t^2u_0;q)_{|\ka|}}{(qtu_0;q)_{|\ka|}}\,
\frac{(qtu_0;q)_{|\ga|}}{(t^2u_0;q)_{|\ga|}}\,
\prod_{i=1}^n\frac{(t;q)_{\ka_i-\ga_i}}{(q;q)_{\ka_i-\ga_i}}\,
\frac{(q^{\ga_i+|\ka|+1}u_i;q)_{\ka_i-\ga_i}}
{(q^{\ga_i+|\ka|}tu_i;q)_{\ka_i-\ga_i}}
\\\times
\prod_{i=1}^{k-1}
\frac{(u_i/u_0;q)_{\ka_i}}{(qu_i/tu_0;q)_{\ka_i}}\,
\frac{(qu_i/t^2u_0;q)_{\ga_i}}{(u_i/tu_0;q)_{\ga_i}}\,
\frac{(qu_i/tu_0;q)_{|\ka|-r+\ka_i}}{(u_i/u_0;q)_{|\ka|-r+\ka_i}}\,
\frac{(u_i/tu_0;q)_{|\ka|-r+\ka_i}}{(qu_i/t^2u_0;q)_{|\ka|-r+\ka_i}}\\
\hspace{4 cm} \times
\prod_{i=k}^n
\frac{(tu_i/u_0;q)_{\ka_i}}{(qu_i/u_0;q)_{\ka_i}}\,
\frac{(qu_i/u_0;q)_{\ga_i}}{(tu_i/u_0;q)_{\ga_i}}\\\times
\prod_{1\le i<j\le n}\frac{(q^{\ka_i-\ka_j}tu_i/u_j;q)_{\ka_j-\ga_j}}
{(q^{\ka_i-\ka_j+1}u_i/u_j;q)_{\ka_j-\ga_j}}\,
\frac{(q^{\ga_i-\ka_j+1}u_i/tu_j;q)_{\ka_j-\ga_j}}
{(q^{\ga_i-\ka_j}u_i/u_j;q)_{\ka_j-\ga_j}}.
\end{multline*}
Then the infinite lower-triangular $n$-dimensional matrices
$(f_{\ba\ka})_{\ba,\ka\in\mathbf Z^n}$ and
$(g_{\ka\ga})_{\ka,\ga\in\mathbf Z^n}$
are mutually inverse.
\end{Lemma}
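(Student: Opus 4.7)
The plan is to deduce this lemma by specializing \cite[Theorem~2.7]{LS}, which the paper explicitly identifies as the source (the Appendix notes the two results are equivalent). Let me denote by $(\tilde f_{\ba\ka},\tilde g_{\ka\ga})$ the matrix inverse pair of \cite[Theorem~2.7]{LS}. Its entries depend on free parameters $(\tilde u_1,\ldots,\tilde u_n)$ together with a gauge scalar, and exhibit the same Vandermonde denominator $\prod_{i<j}(q^{\ba_i}\tilde u_i - q^{\ba_j}\tilde u_j)^{-1}$ and the same determinantal structure as our $f$ and $g$, but in a symmetric form without the $k$-split or the $r$-dependent shifts $|\ka|-r+\ka_i$. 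The inversion $\sum_{\ka}\tilde f_{\ba\ka}\tilde g_{\ka\ga}=\delta_{\ba\ga}$ is then already known, and it suffices to exhibit a substitution of the $\tilde u_i$ and the gauge scalar that recovers $(f_{\ba\ka},g_{\ka\ga})$ exactly, up to a diagonal gauge factor $h_\ba/h_\ga$ which cancels under composition.

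The substitution I would try first is
\begin{equation*}
\tilde u_i\longmapsto u_i/tu_0\quad(1\le i\le k-1),\qquad \tilde u_i\longmapsto u_i/u_0\quad(k\le i\le n),
\end{equation*}
with the gauge scalar identified as a specific monomial in $u_0,t,q$ chosen so that the normalization factor $(t^2u_0;q)_{|\ba|}/(qtu_0;q)_{|\ba|}$ (and its partner in $g$) comes out correctly. The asymmetry between the $i<k$ and $i\ge k$ products in $f$ and $g$ should reflect a \emph{reciprocation symmetry} of $(\tilde f,\tilde g)$: for $i<k$ one effectively replaces $\tilde u_i$ by its reciprocal (which the matrix inverse pair respects up to an overall gauge), and the shift-by-$|\ka|-r+\ka_i$ Pochhammer ratios are produced by repeated application of the elementary identities
\begin{align*}
\frac{(aq^{-n};q)_n}{(bq^{-n};q)_n} &= \frac{(q/a;q)_n}{(q/b;q)_n}\,(a/b)^n,\\
\frac{(a;q)_n}{(b;q)_n}\,\frac{(b;q)_{n-k}}{(a;q)_{n-k}} &= \frac{(q^{1-n}/a;q)_k}{(q^{1-n}/b;q)_k}\,(a/b)^k,
\end{align*}
already used in the proof of the Lemma of Section~\ref{secpieri}. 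The parameter $r$ enters through a single shift of the gauge scalar by $q^{-r}$.

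With the substitution fixed, the verification is a term-by-term match: for each single-index Pochhammer ratio, each pair factor $\prod_{i<j}$, each Vandermonde factor, and each entry of the determinantal factor, one checks that substituting the chosen $\tilde u_i$ produces the corresponding factor in $f$, and independently in $g$. The determinantal ingredient is the most delicate, but its schematic form $(q^{\ba_i}u_i)^{n-j}(1-t^{j-1}(\cdots))$ is preserved by the substitution because the inner ratio involves the $u_i$ only through $q$-shifted ratios $q^{\ba_i}u_i/q^{\ka_s}u_s$ and through $(1-q^{\ba_i+|\ka|}tu_i)/(1-q^{\ba_i+|\ka|}u_i)$, both of which transform homogeneously under the scaling $\tilde u_i\mapsto u_i/u_0$ (and likewise for the variant $u_i/tu_0$ used when $i<k$).

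The main obstacle will be pure bookkeeping. The entries involve many $q$-Pochhammer symbols, and tracking exactly which of them arise from which factor of $\tilde f$ or $\tilde g$ — in particular those produced by the reciprocation for the first $k-1$ indices and by the $q^{-r}$-shift — requires a patient calculation. A clean presentation splits the verification into three matchings: (i) the single-index factors $\prod_i$; (ii) the pair factors $\prod_{i<j}$ together with the Vandermonde denominator; and (iii) the determinant. Once these three matchings are confirmed, the relation $\sum_{\ka} f_{\ba\ka}g_{\ka\ga}=\delta_{\ba\ga}$ follows immediately from \cite[Theorem~2.7]{LS} with no further argument. An alternative direct approach would be to establish the inversion by a Bailey-type argument or a multidimensional ${}_{10}\phi_9$ transformation, but this would essentially re-derive the result of \cite{LS}, so the reduction path is cleaner.
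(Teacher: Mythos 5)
Your overall strategy --- reduce the Lemma to \cite[Theorem~2.7]{LS} by a diagonal transformation followed by elementary manipulations of $q$-shifted factorials --- is exactly the paper's, but two structural points in your sketch would fail as written. First, the single-sequence gauge $f_{\ba\ka}\mapsto f_{\ba\ka}\,h_\ba/h_\ka$, $g_{\ka\ga}\mapsto g_{\ka\ga}\,h_\ka/h_\ga$ is not enough: it forces the extra $\ba$-diagonal factor of the new $f$ to be the \emph{same} function $h$ whose reciprocal appears as its $\ka$-diagonal factor, whereas in the stated Lemma the $\ba$-diagonal of $f$ carries $(u_i/tu_0;q)_{\ba_i}/(qu_i/t^2u_0;q)_{\ba_i}$ for $i<k$ while its $\ka$-diagonal carries the different ratio $(qu_i/tu_0;q)_{\ka_i}/(u_i/u_0;q)_{\ka_i}$ together with all the $r$-shifted factors. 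The paper therefore sandwiches with \emph{two distinct} sequences, taking $f_{\ba\ka}\,\xi_\ba/\zeta_\ka$ and $g_{\ka\ga}\,\zeta_\ka/\xi_\ga$, which still compose to the identity because the $\zeta_\ka$ cancel inside the sum and the $\xi_\ba/\xi_\ga$ is trivial against $\delta_{\ba\ga}$; here $\xi_\ka$ and $\zeta_\ka$ coincide except in the block $\prod_{i=1}^{k-1}$. Second, your claim that ``$r$ enters through a single shift of the gauge scalar by $q^{-r}$'' is not tenable: $r$ appears in the Lemma only inside Pochhammer symbols of \emph{index} $|\ka|-r+\ka_i$, attached to the middle index $\ka$ and appearing reciprocally in $f$ and in $g$. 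This is precisely a $\ka$-diagonal factor and must be built into $\zeta_\ka$ (the paper's $\zeta_\ka$ contains $\prod_{i=1}^{k-1}\frac{(qu_i/tu_0;q)_{|\ka|-r+\ka_i}}{(u_i/u_0;q)_{|\ka|-r+\ka_i}}\,\frac{(u_i/tu_0;q)_{|\ka|-r+\ka_i}}{(qu_i/t^2u_0;q)_{|\ka|-r+\ka_i}}$); it cannot be produced by a substitution of the parameters $\tilde u_i$ of \cite[Theorem~2.7]{LS}, since such a substitution would also contaminate the $\ba$- and $\ga$-indexed factors, nor by rescaling a gauge \emph{scalar}. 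Once the two-sequence gauge is in place, the remainder of your plan --- the three-block matching of single-index factors, cross factors $\prod_{i<j}$ with the Vandermonde denominator, and the determinant, using the two elementary Pochhammer identities you quote --- is the right bookkeeping and is what the paper actually carries out.
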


\begin{proof}
Given two non-zero sequences $(\xi_{\ka})$ and $(\zeta_{\ka})$,
and a pair of matrices $(f_{\ba\ka})$ and $(g_{\ka\ga})$
which are mutually inverse, it is easily checked
(using the trivial relation
$\frac{\xi_{\ba}}{\xi_{\ga}}\delta_{\ba\ga}=\delta_{\ba\ga}$)
that the matrices
$(f_{\ba\ka}\, \xi_{\ba}/\zeta_{\ka})$ and
$(g_{\ka\ga}\, \zeta_{\ka}/\xi_{\ga})$ are mutually inverse.

We choose
\begin{multline*}
\xi_{\ka}=
\Big(\frac qt\Big)^{|\ka|}\,
\frac{(t^2u_0;q)_{|\ka|}}{(qtu_0;q)_{|\ka|}}\,
\prod_{i=1}^{k-1}
\frac{(u_i/tu_0;q)_{\ka_i}}{(qu_i/t^2u_0;q)_{\ka_i}}\,
\prod_{i=k}^n
\frac{(tu_i/u_0;q)_{\ka_i}}{(qu_i/u_0;q)_{\ka_i}}\\\times
\prod_{1\le i<j\le n}\frac{(qu_i/u_j;q)_{\ka_i-\ka_j}}
{(tu_i/u_j;q)_{\ka_i-\ka_j}}\,
\frac{(u_i/u_j;q)_{\ka_i-\ka_j}}
{(qu_i/tu_j;q)_{\ka_i-\ka_j}},
\end{multline*}
\begin{multline*}
\zeta_{\ka}=
\Big(\frac qt\Big)^{|\ka|}\,
\frac{(t^2u_0;q)_{|\ka|}}{(qtu_0;q)_{|\ka|}}\,
\prod_{i=1}^{k-1}
\frac{(u_i/u_0;q)_{\ka_i}}{(qu_i/tu_0;q)_{\ka_i}}\,
\frac{(qu_i/tu_0;q)_{|\ka|-r+\ka_i}}{(u_i/u_0;q)_{|\ka|-r+\ka_i}}\,
\frac{(u_i/tu_0;q)_{|\ka|-r+\ka_i}}{(qu_i/t^2u_0;q)_{|\ka|-r+\ka_i}}\\\times
\prod_{i=k}^n
\frac{(tu_i/u_0;q)_{\ka_i}}{(qu_i/u_0;q)_{\ka_i}}\,
\prod_{1\le i<j\le n}\frac{(qu_i/u_j;q)_{\ka_i-\ka_j}}
{(tu_i/u_j;q)_{\ka_i-\ka_j}}\,
\frac{(u_i/u_j;q)_{\ka_i-\ka_j}}
{(qu_i/tu_j;q)_{\ka_i-\ka_j}},
\end{multline*}
together with the pair of mutually inverse matrices
$(f_{\ba\ka})$ and $(g_{\ka\ga})$ as defined in
\cite[Theorem~2.7]{LS}.

Several elementary manipulations of $q$-shifted factorials eventually
lead to the result in the desired form. To give a sample,
(concentrating only on the products over
$\prod_{1\le i<j\le n}$ of $q$-shifted factorials) we use the
simplification
\begin{align*}
\prod_{1\le i<j\le n}\frac{(q^{\ka_i-\ka_j+1}u_i/tu_j;q)_{\ba_i-\ka_i}}
{(q^{\ka_i-\ka_j+1}u_i/u_j;q)_{\ba_i-\ka_i}}
\frac{(q^{\ka_i-\ba_j}tu_i/u_j;q)_{\ba_i-\ka_i}}
{(q^{\ka_i-\ba_j}u_i/u_j;q)_{\ba_i-\ka_i}}\quad&\\\times
\prod_{1\le i<j\le n}\frac{(qu_i/u_j;q)_{\ba_i-\ba_j}}
{(tu_i/u_j;q)_{\ba_i-\ba_j}}\frac{(u_i/u_j;q)_{\ba_i-\ba_j}}
{(qu_i/tu_j;q)_{\ba_i-\ba_j}}\frac{(tu_i/u_j;q)_{\ka_i-\ka_j}}
{(qu_i/u_j;q)_{\ka_i-\ka_j}}\frac{(qu_i/tu_j;q)_{\ka_i-\ka_j}}
{(u_i/u_j;q)_{\ka_i-\ka_j}}=&\\
\prod_{1\le i<j\le n}\frac{(qu_i/tu_j;q)_{\ba_i-\ka_j}}
{(qu_i/u_j;q)_{\ba_i-\ka_j}}\frac{(u_i/u_j;q)_{\ka_i-\ba_j}}
{(tu_i/u_j;q)_{\ka_i-\ba_j}}\frac{(qu_i/u_j;q)_{\ba_i-\ba_j}}
{(qu_i/tu_j;q)_{\ba_i-\ba_j}}\frac{(tu_i/u_j;q)_{\ka_i-\ka_j}}
{(u_i/u_j;q)_{\ka_i-\ka_j}}=&\\
\prod_{1\le i<j\le n}\frac{(q^{\ba_i-\ba_j+1}u_i/tu_j;q)_{\ba_j-\ka_j}}
{(q^{\ba_i-\ba_j+1}u_i/u_j;q)_{\ba_j-\ka_j}}
\frac{(q^{\ka_i-\ba_j}tu_i/u_j;q)_{\ba_j-\ka_j}}
{(q^{\ka_i-\ba_j}u_i/u_j;q)_{\ba_j-\ka_j}}&
\end{align*}
in the computation of $f_{\ba\ka}$ in the Lemma.
\end{proof}


\begin{thebibliography}{99}

\bibitem{JJ}
N.\ H.\ Jing, T.\ J\'{o}zefiak,
\emph{A formula for two-row Macdonald functions},
Duke Math.\ J., \textbf{67} (1992), 377--385. 

\bibitem{La}
M.\ Lassalle,
\emph{Explicitation des polyn\^omes de Jack et de Macdonald en longueur trois},
C.\ R.\ Acad.\ Sci.\ Paris S\'er.\ I Math. \textbf{333} (2001), 505--508.

\bibitem{LS}
M.\ Lassalle, M.\ J.\ Schlosser,
\emph{Inversion of the Pieri formula for Macdonald polynomials},
Adv.\ Math. \textbf{202} (2006), 289--325.

\bibitem{Ma1}
I.\ G.\ Macdonald,
\emph{A new class of symmetric functions},
S\'em. Lothar. Combin. \textbf{20} (1988), Article~B20a.

\bibitem{Ma2}
I.\ G.\ Macdonald,
\emph{Symmetric Functions and Hall Polynomials},
Clarendon Press, second edition, Oxford, 1995.

\bibitem{Ma3}
I.\ G.\ Macdonald,
\emph{Orthogonal polynomials associated with root systems},
S\'em. Lothar. Combin. \textbf{45} (2000), Article~B45a.

\bibitem{Ma4}
I.\ G.\ Macdonald,
\emph{Symmetric Functions and Orthogonal Polynomials},
University Lecture Series {\bf 12}, Amer. Math. Soc., Providence, 1998.

\bibitem{Ma5}
I.\ G.\ Macdonald,
\emph{Affine Hecke Algebras and Orthogonal Polynomials},
Oxford Univ. Press, Oxford, 2003.

\bibitem{PRZ}
A.\ M.\ Perelomov, E.\ Ragoucy, P.\ Zaugg,
Appendix of \emph{Quantum integrable systems and Clebsch--Gordan series: II},
J. Phys. A: Math. Gen. {\bf 32} (1999), 8563--8576. 

\end{thebibliography}
\end{document}